\DeclareSymbolFont{cyrletters}{OT2}{wncyr}{m}{n}
\DeclareMathSymbol{\Sha}{\mathalpha}{cyrletters}{"58}
\title[Shuffle product for multiple zeta functions ]
{Shuffle product for multiple zeta functions }
\author{Nao Komiyama}
\author{Takeshi Shinohara}
\address{Department of Mathematics, Graduate School of Science, Osaka University Toyonaka, Osaka 560-0043, Japan}
\email{komiyama.nao.aww@osaka-u.ac.jp}
\address{Graduate School of Mathematics, Nagoya University, Furo-cho, Chikusa-ku, Nagoya 464-8602 Japan}
\email{m21022w@math.nagoya-u.ac.jp}
\date{\today}
\subjclass[2020]{11M32}
\keywords{multiple zeta functions, shuffle product, hypergeometric function, zeta-functions of root systems.}
\newtheorem{thm}{Theorem}[section]
\newtheorem{lem}[thm]{Lemma}
\newtheorem{cor}[thm]{Corollary}
\newtheorem{prop}[thm]{Proposition}  
\theoremstyle{plain}
\numberwithin{equation}{section}
\theoremstyle{definition}
\newtheorem{rem}[thm]{Remark}
\newtheorem{notation}[thm]{Notation}     
\newtheorem{exa}[thm]{Examples}       
\newtheorem{que}[thm]{Question}   
\newtheorem{assume}[thm]{Assumption}
\newcommand{\re}{{\rm Re}}
\newcommand{\bfs}{{\bf s}}
\newcommand{\bft}{{\bf t}}
\newcommand{\N}{\mathbb N}
\newcommand{\Z}{\mathbb Z}
\newcommand{\Q}{\mathbb Q}
\newcommand{\R}{\mathbb R}
\newcommand{\C}{\mathbb C}
\begin{document}
\bibliographystyle{amsalpha+}

\begin{abstract}      
In this paper, we investigate the shuffle product relations for Euler-Zagier multiple zeta functions as functional relations.
To this end, we generalize the classical partial fraction decomposition formula and give two proofs.
One is based on a connection formula for Gauss's hypergeometric functions, the other one is based on an elementary calculus.
Though it is hard to write down explicit formula for the shuffle product relations for multiple zeta functions, as in the case of multiple zeta values, 
we will provide inductive steps by using the zeta-functions of root systems.
As an application, we get the functional double shuffle relations for multiple zeta functions and 
show that some relations for multiple zeta values/functions can be deduced from our results.
\end{abstract}

\maketitle
\tableofcontents

\section{Introduction}
We begin with the definition of the {\it (Euler-Zagier) multiple zeta functions}:
{\small
\begin{equation*}
  \zeta_r(s_1,\dots,s_r)
  :=\sum_{0<n_1<\cdots<n_r}\frac{1}{n_1^{s_1}\cdots n_r^{s_r}}
  = \sum_{m_1,\dots,m_r\in\N}\frac{1}{m_1^{s_1}(m_1+m_2)^{s_2}\cdots (m_1+\cdots+m_r)^{s_r}}
\end{equation*}
}
for $r\in\N$ (called {\it depth}) and complex variables $s_1,\dots,s_r\in\C$. 

Multiple zeta functions (MZFs for short) converge absolutely in:
\begin{equation*}
  \mathcal{D}_r := \{(s_1,\dots,s_r)\in\C^r \,|\, \re(s_j+\cdots+s_r)>r-j+1,\,j=1,\dots,r \}.
\end{equation*}
The values of MZFs at positive integer points in $\mathcal{D}_r$ are called {\it multiple zeta values} (MZVs for short).
In 1990s, it was revealed that MZVs appeared in the various areas of mathematics; 
algebraic geometry, low dimensional topology and mathematical physics, for instance 
and it has attracted many mathematicians since then.
One reason why MZVs are interesting is that they satisfy many algebraic relations over $\Q$.
For example, {\it stuffle product relations, shuffle product relations, (extended) double shuffle relations, 
sum formulas, Ohno relations, duality relations, associator relations, confluence relations, Kawashima relations, etc.} 
Here, we give two fundamental relations for MZVs;
\begin{align}\label{equation: stuffle product relation}
  \zeta(a)\zeta(b)
  = \zeta_2(a,b) + \zeta_2(b,a) + \zeta(a+b),
\end{align}

\begin{align}\label{equation: shuffle product relation}
  \zeta(a)\zeta(b)
  &= \sum_{k=0}^{a-1}\binom{b+k-1}{k}\zeta_2(a-k,b+k) + \sum_{k=0}^{b-1}\binom{a+k-1}{k}\zeta_2(b-k,a+k).
\end{align}
We call the equation \eqref{equation: stuffle product relation} (resp. equation \eqref{equation: shuffle product relation}) 
the {\it stuffle} (resp. the {\it shuffle}) product relations for the case depth $1$ and depth $1$.
As you can see, the stuffle/shuffle product relations describe that 
the product of MZVs can be expressed as a linear combination of MZVs.
By combining the above two relations, we get the non-trivial linear relations for MZVs which is called {\it (finite) double shuffle relations};
\begin{align}
  &\zeta_2(a,b) + \zeta_2(b,a) + \zeta(a+b)\notag\\
  &= \sum_{k=0}^{a-1}\binom{b+k-1}{k}\zeta_2(a-k,b+k) 
   + \sum_{k=0}^{b-1}\binom{a+k-1}{k}\zeta_2(b-k,a+k).\notag
\end{align}
In general, we can obtain such relations for MZVs in a similar way. 
Furthermore, it is conjectured that all linear relations for MZVs can be deduced from {\it extended} double shuffle relations.
For the theory of MZVs, see \cite{Z16}.

From an analytic point of view, it is quite natural to ask the following question:
\begin{que}[cf. \cite{M06}]\label{Matsumoto's question}
 {\it Algebraic relations for MZVs can be extended to functional relations for MZFs?}
\end{que} 
This type of question was first raised by Matsumoto around 2000.
It is well known that there is a trivial answer: 
one can easily check that the stuffle product relations for MZVs (cf. \eqref{equation: stuffle product relation}) 
can be extended to functional relations for MZFs.
For the shuffle product relations, however, it is not straightforward.
Because even in the double case \eqref{equation: shuffle product relation}, 
if one considers $a\in\C$ or $b\in\C$, then the right-hand side of \eqref{equation: shuffle product relation} does not make any sense.
We encounter the same difficulty in general cases.
Many mathematicians have tried to find an answer other than stuffle product relations; 
\cite{HMO18}, \cite{HMO20}, \cite{KMT11}, \cite{MO22}, \cite{N06}, \cite{T07}, etc.

The present paper aims to extend the shuffle product relations for MZVs to functional relations for MZFs 
which might be an answer to Question \ref{Matsumoto's question}.
We shall prove that the product of two MZFs can be expressed as infinite summations in terms of MZFs by the method of 
{\it infinite partial fraction decomposition (IPFD for short, see Proposition \ref{prop: IPFD, infinite partial fraction decomposition}).}
We call those relations for MZFs {\it shuffle product relations for MZFs}, namely, our main result is;

\bigskip
\noindent
\textbf{Theorem \ref{Main theorem}.}
\textit{Shuffle product relations hold for MZFs as functional relations.
}
\bigskip

For details, see \S\ref{sec: General case and zeta functions of root systemsn}. 
The explicit formula for the simplest case is as follows. 

\bigskip
\noindent
\textbf{Proposition \ref{prop: explicit formula for shuffle product relation for double zeta functions}.}
\label{Example: shuffle product formula for double zeta functions}
\textit{
For $s$, $t\in\C$ with $\re(s)$, $\re(t)>1$,  
we have 
\begin{align*}
 \zeta(s)\zeta(t)
 &= \sum_{k=0}^{\infty} \left[\binom{t+k-1}{k}\zeta_2(s-k,t+k) - \binom{s+t+k-1}{s+k}\zeta_2(-k,s+t+k) \right] \\
 &\quad+ \sum_{k=0}^{\infty} \left[ \binom{s+k-1}{k}\zeta_2(t-k,s+k) - \binom{s+t+k-1}{t+k}\zeta_2(-k,s+t+k) \right].
\end{align*}
}
\bigskip

Though it is hard to write down explicit formula for general cases by using MZFs as in the case of MZVs, 
we will give the inductive steps by using the {\it zeta-functions of root systems} 
and provide some examples in \S\ref{sec: General case and zeta functions of root systemsn}.

Since we know that stuffle product relations hold for MZFs, by combining Theorem \ref{Main theorem},
we have

\bigskip
\noindent
\textbf{Theorem \ref{thm: functional double shuffle product relation}}
\textit{
Functional double shuffle relations hold for MZFs.
}
\bigskip

This is also one answer for Question \ref{Matsumoto's question} and here we emphasize the importance of this fact.
It is easy to see that the functional double shuffle relations for MZFs with regular positive integer indices
recover the original double shuffle relations for MZVs.
In other words, the double shuffle relations for MZVs, which include many linear relations for them, 
can be extended to the functional relations for MZFs.
Therefore, we expect that most algebraic relations for MZVs can be extended to functional relations for MZFs.
 
As an application of Theorem \ref{Main theorem} and Theorem \ref{thm: functional double shuffle product relation}, 
we study the connections with previous works, 
that is, our main results can be regarded as generalizations of the following previous works.
\begin{enumerate}
\renewcommand{\labelenumi}{(\roman{enumi})}
\item 
Functional relations for the zeta-functions of root systems including the shuffle product relations for MZVs studied in \cite{KMT11} 
(see Corollary \ref{KMT's results}).
\item  
{\it Sum formulas and extended double shuffle relations for MZFs} 
(see Corollary \ref{thm: functional sum formula} and Corollary \ref{cor: EDSR for MZFs}).
\end{enumerate}

The plan of the present paper is as follows:
in \S\ref{sec: Infinite partial fraction decomposition}, 
we generalize the classical partial fraction decomposition formula 
by using the connection formula for Gauss's hypergeometric functions or an elementary calculus. 
We also give a proof of the shuffle product relations for double zeta functions.
In \S\ref{sec: General case and zeta functions of root systemsn}, 
we prove our main theorem.
We also exposit how to calculate the shuffle product relations for MZFs by using the zeta-functions of root systems.
Some explicit examples will be presented here.
In \S\ref{sec: Applications and problems},
we show that some previous works can be deduced from our results and discuss some miscellaneous topics.
In \S\ref{sec:Concluding remarks}, 
we will explore possible connections between the recurrence relations for MZFs and the shuffle product relations for MZFs under
certain assumptions.


{\ack {\rm The authors would like to thank Kohji Matsumoto for his helpful comments, 
Takashi Nakamura for a careful reading of the manuscript, and Keita Nakai for his valuable comments.

N.K. has been supported by grants JSPS KAKENHI JP23KJ1420.
T.S. has been supported by grants JSPS KAKENHI JP24KJ1252. }}


\section{Infinite partial fraction decomposition}\label{sec: Infinite partial fraction decomposition}
In this section, we shall prove a key formula (Proposition \ref{prop: IPFD, infinite partial fraction decomposition}). 
It is known that the shuffle product relations for MZVs can be proved by
using an {\it iterated integral expression} or the method of {\it partial fraction decomposition (PFD for short).}
For $(k_1,\dots,k_r)\in\N$ with $k_r\ge2$, multiple zeta value $\zeta_r(k_1,\dots,k_r)$ has the iterated integral expression;
\begin{align*}
  \zeta_r(k_1,\dots,k_r) 
  = I(\underbrace{0,\dots,0}_{k_1-1},1,\dots,\underbrace{0,\dots,0}_{k_r-1},1),
\end{align*} 
where
\begin{align*}
 I(\epsilon_1,\dots,\epsilon_n)
 := \int_{0<t_1<\cdots<t_n<1}\prod_{j=1}^{n}\omega_{\epsilon_j}(t_j) 
\end{align*}
for $\epsilon_j\in\{0,\,1\}$ and $\omega_0(t)=\frac{dt}{t}$, $\omega_1(t)=\frac{dt}{1-t}$.
This expression yields the shuffle product relations for MZVs.
One sees that the iterated integral expression of MZVs is valid only when $k_1,\dots,k_r$ are positive integers ($k_r\ge2$)
and therefore we have to find another way for our interest.

\begin{rem}
Joyner (\cite{J10}) introduced the iterated integrals for complex variables via the Mellin transform 
and studied the connection with MZFs, multiple polylogarithms, etc.
However, it seems that he does not give the (shuffle) product formula for them.
At least authors could not find explicit formula for the product of general iterated integrals in \cite{J10}.
\end{rem}

We next explain the method of PFD.
The following formula is fundamental; 
for $a$, $b\in\N$,
\begin{align}\label{eqn: classical partial fractin decompositon}
 \frac{1}{x^ay^b} 
 = \sum_{k=0}^{a-1}\binom{b+k-1}{k} \frac{1}{x^{a-k}(x+y)^{b+k}} + \sum_{k=0}^{b-1}\binom{a+k-1}{k} \frac{1}{y^{b-k}(x+y)^{a+k}}. 
\end{align}
We remark that if we take summation over all $x$, $y\in\N$ on both sides of \eqref{eqn: classical partial fractin decompositon},
then we get the shuffle product relations for double zeta values \eqref{equation: shuffle product relation}.
Additionally, Komori, Matsumoto and Tsumura (\cite{KMT11}) explained how to apply \eqref{eqn: classical partial fractin decompositon} 
to the product of MZVs for general depths 
and proved the shuffle product relations for MZVs without the iterated integral expression.
Furthermore, they succeeded in providing certain functional relations for MZFs and zeta-functions of root systems 
that include the shuffle product relations for MZVs, see \S\ref{sec: Applications and problems}.

The next lemma plays an important role in our paper.
\begin{lem}\label{lem: pieces of shuffle product relation for dzf}
For $s$, $t$, $x$, $y\in\C$ with $\re(s)$, $\re(t)$, $\re(x)$, $\re(y)>0$, we have 
\begin{equation*}
 \frac{(x+y)^{s+t}}{x^sy^t} 
 = \frac{\Gamma(s+t)}{\Gamma(s)\Gamma(t)}
    \left(\frac{1}{s}\,\cdot {}_2F_1\left(\begin{matrix} s+t,\, 1 \\ s+1 \end{matrix}\,\middle|\, \frac{x}{x+y}\right) 
    + \frac{1}{t}\, \cdot{}_2F_1\left(\begin{matrix} s+t,\, 1 \\ t+1 \end{matrix}\,\middle|\, \frac{y}{x+y}\right)
    \right),
\end{equation*}
where ${}_2F_1\left(\begin{matrix} \alpha,\, \beta \\ \gamma \end{matrix}\,\middle|\, z\right)$ is the Gauss's hypergeometric function;
\begin{align*}
  {}_2F_1\left(\begin{matrix} \alpha,\, \beta \\ \gamma \end{matrix}\,\middle|\, z\right)
  := \sum_{n=0}^{\infty} \frac{(\alpha)_n(\beta)_n}{(\gamma)_n n!}z^n
\end{align*}
for $z\in\C$ with $|z|<1$, $\alpha$, $\beta$, $\gamma\in\C$ with $\gamma\notin\Z_{\le0}$, 
$(X)_0=1$, $(X)_n=X(X+1)\cdots (X+n-1)$ are the Pochhammer symbols and
$\Gamma(s)$ is the gamma function.
\end{lem}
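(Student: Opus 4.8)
The plan is to read the asserted identity as a specialization of Gauss's connection formula relating the two fundamental solutions of the hypergeometric equation at $z=0$ and at $z=1$:
\begin{align*}
 {}_2F_1(a,b;c;z)
 &= \frac{\Gamma(c)\Gamma(c-a-b)}{\Gamma(c-a)\Gamma(c-b)}\,{}_2F_1(a,b;a+b-c+1;1-z)\\
 &\quad + (1-z)^{c-a-b}\,\frac{\Gamma(c)\Gamma(a+b-c)}{\Gamma(a)\Gamma(b)}\,{}_2F_1(c-a,c-b;c-a-b+1;1-z).
\end{align*}
I would specialize to $a=1$, $b=s+t$, $c=s+1$, so that $c-a-b=-t$, $a+b-c+1=t+1$ and $c-a-b+1=1-t$, and I would set $z=x/(x+y)$, hence $1-z=y/(x+y)$; with these choices the two hypergeometric functions on the right are exactly the two appearing in the Lemma.

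The remaining steps are short computations. First, simplify the connection coefficients: using $\Gamma(s+1)=s\,\Gamma(s)$ and $\Gamma(1-t)=-t\,\Gamma(-t)$, the first coefficient $\frac{\Gamma(s+1)\Gamma(-t)}{\Gamma(s)\Gamma(1-t)}$ reduces to $-s/t$, while the second, $\frac{\Gamma(s+1)\Gamma(t)}{\Gamma(1)\Gamma(s+t)}$, becomes $\frac{s\,\Gamma(s)\Gamma(t)}{\Gamma(s+t)}$. Second, the last hypergeometric factor degenerates: ${}_2F_1(c-a,c-b;c-a-b+1;1-z)={}_2F_1(s,1-t;1-t;1-z)=(1-(1-z))^{-s}=z^{-s}$ by the elementary evaluation ${}_2F_1(\alpha,\beta;\beta;w)=(1-w)^{-\alpha}$, and $(y/(x+y))^{-t}(x/(x+y))^{-s}=(x+y)^{s+t}/(x^s y^t)$. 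Putting these together, the specialized connection formula reads
\begin{align*}
 {}_2F_1(s+t,1;s+1;x/(x+y))
 &= -\frac{s}{t}\,{}_2F_1(s+t,1;t+1;y/(x+y))\\
 &\quad + \frac{s\,\Gamma(s)\Gamma(t)}{\Gamma(s+t)}\cdot\frac{(x+y)^{s+t}}{x^s y^t}.
\end{align*}
Dividing by $s$, transposing the first term on the right, and multiplying through by $\frac{\Gamma(s+t)}{\Gamma(s)\Gamma(t)}$ gives precisely the claimed equality. (Alternatively, one can bypass the connection formula: expressing each ${}_2F_1$ by Euler's integral representation turns the right-hand side into $\frac{\Gamma(s+t)}{\Gamma(s)\Gamma(t)}(x+y)^{s+t}\int_0^\infty v^{s-1}(xv+y)^{-s-t}\,dv$ after combining the two $[0,1]$-integrals into one integral over $[0,\infty)$, and the latter equals $x^{-s}y^{-t}\,\Gamma(s)\Gamma(t)/\Gamma(s+t)$ by the Eulerian Beta integral.)

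The one point that really needs care is the range of parameters. Gauss's connection formula in the displayed form requires $c-a-b=-t\notin\Z$, i.e. $t\notin\N$, and is initially valid only for $z$ off the cut $[1,\infty)$ (equivalently $1-z\notin(-\infty,0]$). One should check that $\re(x),\re(y)>0$ forces both $x/(x+y)$ and $y/(x+y)$ to avoid $[1,\infty)$, though their moduli need not be $<1$, so ${}_2F_1$ has to be understood as the analytic continuation of its defining series, not the series itself. To remove the restriction $t\notin\N$ (and, by symmetry, $s\notin\N$) I would appeal to analyticity in the parameters: both sides of the identity are holomorphic in $(s,t)$ on $\{\re(s)>0\}\times\{\re(t)>0\}$ — there $s+1,t+1\notin\Z_{\le0}$, so both ${}_2F_1$'s are regular, and $\frac{\Gamma(s+t)}{\Gamma(s)\Gamma(t)}$ is holomorphic and non-vanishing — so the identity, once proved off the discrete sets $s\in\N$ and $t\in\N$, extends to the whole region. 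As a consistency check, for $s,t\in\N$ the two hypergeometric series sum to rational functions and the Lemma reduces, after division by $(x+y)^{s+t}$, to the classical partial fraction decomposition \eqref{eqn: classical partial fractin decompositon}.
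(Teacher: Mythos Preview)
Your proof is correct and follows essentially the same route as the paper: specialize Gauss's connection formula with $(\alpha,\beta,\gamma)=(s+t,1,s+1)$ and $z=x/(x+y)$, use ${}_2F_1(s,1-t;1-t;w)=(1-w)^{-s}$ to collapse the second term, and rearrange. Your additional care about the parameter restriction $t\notin\N$ and the analytic continuation in $(s,t)$ is a point the paper glosses over.
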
    

\begin{proof} 
We first review the connection formula for hypergeometric function ${}_2F_1$, see \cite[\S14.53]{WW27} for details.
For $z\in\C$, $\alpha$, $\beta$, $\gamma\in\C$ 
with $\alpha$, $\beta$, $\gamma-\alpha$, $\gamma-\beta$, $\pm(\alpha+\beta-\gamma)+1\notin\Z_{\le0}$, we have
\begin{align*}
    {}_2F_1\left(\begin{matrix} \alpha,\, \beta \\ \gamma \end{matrix}\middle|\, z \right)
    &= \frac{\Gamma(\gamma)\Gamma(\gamma-\alpha-\beta)}{\Gamma(\gamma-\alpha)\Gamma(\gamma-\beta)}
       {}_2F_1\left(\begin{matrix} \alpha,\, \beta \\ \alpha+\beta-\gamma+1 \end{matrix}\,\middle|\, 1-z \right)\\
     &\quad+\frac{\Gamma(\gamma)\Gamma(\alpha+\beta-\gamma)}{\Gamma(\alpha)\Gamma(\beta)}
       (1-z)^{\gamma-\alpha-\beta}
       {}_2F_1\left(\begin{matrix} \gamma-\alpha,\, \gamma-\beta \\ \gamma-\alpha-\beta+1 \end{matrix}\,\middle|\, 1-z \right).
\end{align*}
By putting $\alpha=s+t$, $\beta=1$, $\gamma=s+1$, $z=\frac{x}{x+y}$ in the above formula, one has
\begin{align*}
    {}_2F_1\left(\begin{matrix} s+t,\, 1 \\ s+1 \end{matrix}\,\middle|\, \frac{x}{x+y} \right)
    &= \frac{\Gamma(s+1)\Gamma(-t)}{\Gamma(1-t)\Gamma(s)} 
       {}_2F_1\left(\begin{matrix} s+t,\, 1 \\ t+1 \end{matrix}\,\middle|\, \frac{y}{x+y} \right)\\
    &\quad + \frac{\Gamma(s+1)\Gamma(t)}{\Gamma(s+t)\Gamma(1)}\left(\frac{y}{x+y}\right)^{-t}
       {}_2F_1\left(\begin{matrix} 1-t,\, s \\ 1-t \end{matrix}\,\middle|\, \frac{y}{x+y} \right)\\
    &= -\frac{s}{t}\, {}_2F_1\left(\begin{matrix} s+t,\, 1 \\ t+1 \end{matrix}\,\middle|\, \frac{y}{x+y} \right)
       + s\frac{\Gamma(s)\Gamma(t)}{\Gamma(s+t)} \frac{(x+y)^{s+t}}{x^sy^t}.
\end{align*}
This implies the lemma.
\end{proof}

We generalize the classical PFD \eqref{eqn: classical partial fractin decompositon}.
For our simplicity, we put
\begin{align}\label{eqn:general binomial coefficient}
\binom{s}{t}:=\frac{\Gamma(s+1)}{\Gamma(t+1)\Gamma(s-t+1)}
\end{align}
for $s,t\in\C$.
When $s,t\in\N$ with $s\ge t$, $\binom{s}{t}$ is nothing but the original binomial coefficient.

\begin{prop}[Infinite PFD]\label{prop: IPFD, infinite partial fraction decomposition}
For $s$, $t$, $x$, $y\in\C$ with $\re(s)$, $\re(t)$, $\re(x)$, $\re(y)>0$,
\begin{align*}\label{eqn: IPFD, infinite partial fraction decomposition}\tag{IPFD}
 \frac{1}{x^sy^t} 
 &= \sum_{k=0}^{\infty} 
     \left[ \binom{t+k-1}{k}\frac{1}{x^{s-k}(x+y)^{t+k}} - \binom{s+t+k-1}{s+k}\frac{1}{x^{-k}(x+y)^{s+t+k}} \right]\\
 &\quad+ \sum_{k=0}^{\infty} 
          \left[ \binom{s+k-1}{k}\frac{1}{y^{t-k}(x+y)^{s+k}} - \binom{s+t+k-1}{t+k}\frac{1}{y^{-k}(x+y)^{s+t+k}} \right].
\end{align*}
\end{prop}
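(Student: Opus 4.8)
The plan is to read Proposition~\ref{prop: IPFD, infinite partial fraction decomposition} as nothing more than Lemma~\ref{lem: pieces of shuffle product relation for dzf} combined with the generalized binomial theorem: the substance lies entirely in the Lemma, while the Proposition merely repackages it in the particular symmetric shape that will later be summable over $x,y\in\N$. A second, calculus-based proof is obtained by replacing the hypergeometric connection formula underlying the Lemma by Euler's integral representation of ${}_2F_1$.

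For the first proof I would start from Lemma~\ref{lem: pieces of shuffle product relation for dzf}, divide both sides by $(x+y)^{s+t}$, and substitute the defining power series of the two Gauss hypergeometric functions; this is legitimate on the region $|x|,|y|<|x+y|$, which in particular contains the positive orthant $x,y>0$ relevant to the zeta applications. Using $(1)_n=n!$ and the elementary Pochhammer identity $\frac{\Gamma(s+t)}{\Gamma(s)\Gamma(t)}\cdot\frac{(s+t)_n}{s\,(s+1)_n}=\binom{s+t+n-1}{s+n}$ together with its $s\leftrightarrow t$ mirror, one arrives at the auxiliary expansion
\begin{equation*}
 \frac{1}{x^sy^t}
 =\sum_{k=0}^{\infty}\binom{s+t+k-1}{s+k}\frac{1}{x^{-k}(x+y)^{s+t+k}}
 +\sum_{k=0}^{\infty}\binom{s+t+k-1}{t+k}\frac{1}{y^{-k}(x+y)^{s+t+k}}.
\end{equation*}

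On the other hand the generalized binomial series $(1-z)^{-t}=\sum_{k\ge0}\binom{t+k-1}{k}z^{k}$, applied with $z=\frac{x}{x+y}$, and again with $s$ and $t$ interchanged and $z=\frac{y}{x+y}$, yields the two one-variable expansions
\begin{equation*}
 \frac{1}{x^sy^t}=\sum_{k=0}^{\infty}\binom{t+k-1}{k}\frac{1}{x^{s-k}(x+y)^{t+k}}
 =\sum_{k=0}^{\infty}\binom{s+k-1}{k}\frac{1}{y^{t-k}(x+y)^{s+k}}.
\end{equation*}
All four series converge absolutely on the same region, so termwise regrouping is permitted. Subtracting the auxiliary expansion from the sum of the two one-variable expansions and separating the $x$-terms from the $y$-terms reproduces exactly the right-hand side of Proposition~\ref{prop: IPFD, infinite partial fraction decomposition}, whose value is therefore $\frac{1}{x^sy^t}+\frac{1}{x^sy^t}-\frac{1}{x^sy^t}=\frac{1}{x^sy^t}$.

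For the second proof I would instead invoke Euler's integral representation of ${}_2F_1$: writing $u=\frac{x}{x+y}$ and $v=\frac{y}{x+y}$, the two series of the auxiliary expansion become, up to the common factor $\frac{\Gamma(s+t)}{\Gamma(s)\Gamma(t)}(x+y)^{-s-t}$, the integrals $\int_0^1\tau^{s-1}(v+u\tau)^{-s-t}\,d\tau$ and $\int_0^1\sigma^{t-1}(u+v\sigma)^{-s-t}\,d\sigma$; the substitutions $p=\frac{u\tau}{v+u\tau}$ and $q=1-p$ then merge their sum into $\frac{1}{u^sv^t}\int_0^1 p^{s-1}(1-p)^{t-1}\,dp=\frac{\Gamma(s)\Gamma(t)}{\Gamma(s+t)}\cdot\frac{1}{u^sv^t}$, and reinstating the factor returns $\frac{1}{x^sy^t}$. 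As a sanity check one verifies that for $s=a\in\N$ and $t=b\in\N$ the two negative tails of Proposition~\ref{prop: IPFD, infinite partial fraction decomposition} cancel the $k\ge a$ and $k\ge b$ parts of the corresponding one-variable expansions, so that the statement collapses to the classical formula~\eqref{eqn: classical partial fractin decompositon}. The algebra --- Pochhammer manipulations and Pascal-type identities for the generalized binomial coefficients of~\eqref{eqn:general binomial coefficient} --- is entirely routine; the one place that genuinely requires care, and hence the main obstacle, is the analytic bookkeeping, namely fixing a domain on which the hypergeometric and binomial series converge absolutely so that the termwise manipulations above are justified --- innocuous for the intended application, where $x$ and $y$ run over the positive integers.
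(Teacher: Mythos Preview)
Your first proof is essentially identical to the paper's main proof: both deduce IPFD from Lemma~\ref{lem: pieces of shuffle product relation for dzf} by expanding the two hypergeometric series and pairing each with a copy of $\frac{1}{x^sy^t}$ expanded via the generalized binomial theorem. The paper's phrasing ``$\frac{1}{x^sy^t}=\bigl[\frac{1}{x^sy^t}-(\text{first hypergeometric piece})\bigr]+\bigl[\frac{1}{x^sy^t}-(\text{second hypergeometric piece})\bigr]$'' is exactly your ``two one-variable expansions minus the auxiliary expansion.''

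Your second proof, however, differs from the paper's alternative. The paper's second argument bypasses Lemma~\ref{lem: pieces of shuffle product relation for dzf} entirely: it writes $\frac{1}{x^sy^t}$ as a double Mellin integral over $[0,\infty)^2$, splits the domain into $\{u<v\}$ and $\{v<u\}$, and then iterates integration by parts in the inner variable, producing the two bracketed sums of IPFD directly (the tail being handled by $\Gamma(R,x)/\Gamma(R)\to1$ for the incomplete gamma function). Your second route instead stays within the hypergeometric framework and verifies the auxiliary expansion via Euler's integral representation and the Beta integral --- in effect giving an alternative proof of Lemma~\ref{lem: pieces of shuffle product relation for dzf} rather than an independent proof of IPFD. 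Both alternatives are valid; the paper's Mellin--integration-by-parts argument is more self-contained (no hypergeometric input at all) and foreshadows the style of estimate used in Lemma~\ref{lem: justification of the change of order of two infinite summations}, while yours makes the connection to the Beta integral and Euler's formula transparent.
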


\begin{proof}
By Lemma \ref{lem: pieces of shuffle product relation for dzf}, we have 
\begin{align*}
  \frac{1}{x^sy^t} 
  &= \frac{1}{x^sy^t} - \frac{\Gamma(s+t)}{\Gamma(s)\Gamma(t)}\ 
                         \frac{1}{s}\,\cdot {}_2F_1\left(\begin{matrix} s+t,\, 1 \\ s+1 \end{matrix}\,\middle|\, \frac{x}{x+y}\right)
                         \frac{1}{(x+y)^{s+t}} \\
  &\quad+ \frac{1}{x^sy^t} - \frac{\Gamma(s+t)}{\Gamma(s)\Gamma(t)}\ 
                         \frac{1}{t}\, \cdot{}_2F_1\left(\begin{matrix} s+t,\, 1 \\ t+1 \end{matrix}\,\middle|\, \frac{y}{x+y}\right)
                         \frac{1}{(x+y)^{s+t}}.
\end{align*}
Thus, we immediately get the claim by the generalized binomial theorem:
$$
(1+z)^\alpha=\sum_{k=0}^\infty\binom{\alpha}{k}z^k \quad (z\in\C,\,|z|<1).
$$
\end{proof}

Note that this equation recovers the PFD \eqref{eqn: classical partial fractin decompositon} 
by putting $s$, $t\in\N$.
Indeed, if $s$, $t\in\N$, telescoping happens on the right-hand side. 

\begin{rem}
We can prove our key equation \eqref{eqn: IPFD, infinite partial fraction decomposition} elementary as follows.

\noindent
{\it Another proof of equation \eqref{eqn: IPFD, infinite partial fraction decomposition}}:
By the Mellin transform, we have
\begin{align*}
    \frac{1}{x^sy^t}
    &= \int_{[0,\infty)^2}\frac{u^{s-1}v^{t-1}}{\Gamma(s)\Gamma(t)}e^{-xu}e^{-yv}dudv\\
    &= \int_{0<u<v} \frac{u^{s-1}v^{t-1}}{\Gamma(s)\Gamma(t)}e^{-xu}e^{-yv}dudv 
      +\int_{0< v< u} \frac{u^{s-1}v^{t-1}}{\Gamma(s)\Gamma(t)}e^{-xu}e^{-yv}dudv.
\end{align*}
Due to the symmetry, we now compute the first term on the right-hand side of the last equality. 
By integration by parts, we have 
\begin{align*}
    &\int_{0<u<v}\frac{u^{s-1}v^{t-1}}{\Gamma(s)\Gamma(t)}e^{-xu}e^{-yv}dudv \\
    &= \int_{0}^{\infty}\frac{u^{s-1}}{\Gamma(s)}e^{-xu}
        \left( -\frac{u^{t}e^{-yu}}{\Gamma(t+1)} + y\int_{u}^{\infty}\frac{v^{t}}{\Gamma(t+1)}e^{-yv}dv \right)du\\
    &= \int_{0}^{\infty}\frac{u^{s-1}}{\Gamma(s)}e^{-xu}
        \left( 
            -\sum_{k=0}^{R-1}
              \frac{u^{t+k}e^{-yu}}{\Gamma(t+k+1)}y^k 
               + y^{R}\int_{u}^{\infty} \frac{v^{t+R-1}}{\Gamma(t+R)}e^{-yv}dv 
        \right)du\\
    &= \int_{0}^{\infty}\frac{u^{s-1}}{\Gamma(s)}e^{-xu}
        \left(
            -\sum_{k=0}^{\infty}\frac{u^{t+k}e^{-yu}}{\Gamma(t+k+1)}y^k + y^{-t} 
        \right)du\\
    &= \frac{1}{x^sy^t} 
       - \sum_{k=0}^{\infty} \binom{s+t+k-1}{t+k}\frac{1}{y^{-k}(x+y)^{s+t+k}}.
\end{align*}
The third equality holds because of Lemma \ref{lem: limit of ratio of the upper incomplete gamma and gamma functions}. 
Therefore, we obtain the equation \eqref{eqn: IPFD, infinite partial fraction decomposition}
by applying the generalized binomial theorem to the first term on the right-hand side of the last equality. \qed
\end{rem}

\begin{lem}\label{lem: limit of ratio of the upper incomplete gamma and gamma functions}
For $x\in\R_{\ge0}$, we have
\[
  \frac{\Gamma(R,x)}{\Gamma(R)} \rightarrow 1 \quad (R\rightarrow\infty),
\]
where, $\Gamma(s,x)$ is the upper incomplete gamma function; for $s\in\C$, $x\in\R_{\ge0}$, 
\[
  \Gamma(s,x) := \int_{x}^{\infty}t^{s-1}e^{-t}dt.
\]
\end{lem}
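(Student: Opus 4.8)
The plan is to estimate $\Gamma(R,x)$ against the full integral $\Gamma(R)$. Since the integrand $t^{R-1}e^{-t}$ is nonnegative, we always have $0\le\Gamma(R,x)\le\Gamma(R)$, so it will be enough to bound the difference $\Gamma(R)-\Gamma(R,x)=\int_0^x t^{R-1}e^{-t}\,dt$ from above. First I would use the trivial estimate $e^{-t}\le 1$ on $[0,x]$ to get
\begin{equation*}
 0\le\Gamma(R)-\Gamma(R,x)\le\int_0^x t^{R-1}\,dt=\frac{x^R}{R}\qquad(x\in\R_{\ge0},\ R>0),
\end{equation*}
the case $x=0$ being trivial since then $\Gamma(R,0)=\Gamma(R)$. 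Dividing by $\Gamma(R)$ and using $R\,\Gamma(R)=\Gamma(R+1)$, this becomes
\begin{equation*}
 1-\frac{x^R}{\Gamma(R+1)}\ \le\ \frac{\Gamma(R,x)}{\Gamma(R)}\ \le\ 1,
\end{equation*}
so the lemma will follow from the squeeze theorem once we know that $x^R/\Gamma(R+1)\to0$ as $R\to\infty$ for each fixed $x\ge0$.

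For the latter limit I would split into two cases. If $0\le x\le 1$ it is immediate, because $x^R\le1$ while $\Gamma(R+1)\to\infty$. If $x>1$, set $n=\lfloor R\rfloor$; then $x^R\le x^{\,n+1}$, and, using that $\Gamma$ is increasing on $[2,\infty)$ together with $R+1\ge n+1\ge2$ for $R$ large, we have $\Gamma(R+1)\ge\Gamma(n+1)=n!$, whence
\begin{equation*}
 \frac{x^R}{\Gamma(R+1)}\ \le\ \frac{x^{\,n+1}}{n!}\ =\ x\cdot\frac{x^n}{n!}\ \longrightarrow\ 0,
\end{equation*}
since the general term of the convergent exponential series $\sum_{n\ge0}x^n/n!=e^x$ tends to $0$. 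One could instead simply quote Stirling's formula $\Gamma(R+1)\sim\sqrt{2\pi R}\,(R/e)^R$, which gives $x^R/\Gamma(R+1)\sim(ex/R)^R/\sqrt{2\pi R}\to0$ directly.

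The argument is completely elementary and there is no serious obstacle. The only point that needs a little attention is the passage from the factorial growth of $\Gamma$ at the integers to the statement for the real parameter $R\to\infty$ — that is, the limit $x^R/\Gamma(R+1)\to0$ — which is handled either by the eventual monotonicity of $\Gamma$ on $[2,\infty)$ or by Stirling's asymptotic as above.
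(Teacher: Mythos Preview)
Your argument is correct: the bound $\Gamma(R)-\Gamma(R,x)\le x^R/R$ followed by $x^R/\Gamma(R+1)\to 0$ is a clean and fully rigorous way to establish the limit. The paper itself gives no proof beyond the remark that ``it is easy to check that the claim is valid,'' so there is nothing substantive to compare against; your write-up simply supplies the omitted details in an elementary fashion.
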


\begin{proof}
  It is easy to check that the claim is valid.
\end{proof}

By taking the summation over all $x$, $y\in\N$ on both sides of \eqref{eqn: IPFD, infinite partial fraction decomposition},
one may expect that one can obtain shuffle product relations for double zeta functions 
\eqref{eqn: shuffle product relation for double zeta functions}.
Indeed, we are able to do that because the next lemma justifies the change of order of infinite summations.

\begin{lem}\label{lem: justification of the change of order of two infinite summations}
For $s$, $t\in\C$ with $\re(s)$, $\re(t)>1$,  the following series absolutely converges:
\[
  \sum_{k=0}^{\infty}\left[ \binom{t+k-1}{k}\zeta_2(s-k,t+k) - \binom{s+t+k-1}{s+k}\zeta_2(-k,s+t+k) \right].
\]
\end{lem}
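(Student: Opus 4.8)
The plan is to bound each summand and show that the resulting series of bounds converges. Fix $s,t\in\C$ with $\sigma:=\re(s)>1$ and $\tau:=\re(t)>1$. For the first piece, $\binom{t+k-1}{k}\zeta_2(s-k,t+k)$, the zeta value $\zeta_2(s-k,t+k)$ has second argument with real part $\tau+k$ growing, but the first argument $s-k$ has real part $\sigma-k\to-\infty$, so $(s_1,\dots,s_r)$ leaves the domain of absolute convergence $\mathcal D_2$ and $\zeta_2(s-k,t+k)$ must be understood via meromorphic continuation; the same applies to $\zeta_2(-k,s+t+k)$. So the first step is to record an estimate for $\zeta_2(a,b)$ when $\re(a)$ is very negative and $\re(b)$ is large — the natural tool is the classical integral/Mellin-Barnes representation or the functional equation for the double zeta function, which expresses $\zeta_2(a,b)$ near such points in terms of products $\Gamma(\cdot)\zeta(\cdot)$ and lower-depth zeta functions, giving polynomial-in-$k$ control after the $\Gamma$-factors are accounted for. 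Alternatively, one can sidestep continuation entirely: combine the two terms inside the bracket \emph{before} estimating. That is the cleaner route.

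So the second step, which I expect to be the crux, is to estimate the \emph{bracketed difference} $\binom{t+k-1}{k}\zeta_2(s-k,t+k) - \binom{s+t+k-1}{s+k}\zeta_2(-k,s+t+k)$ directly, exploiting cancellation. Going back to \eqref{eqn: IPFD, infinite partial fraction decomposition}: for fixed $x=m$, $y=n$ with $m,n\in\N$, the bracket equals the partial contribution $\int_{0<u<v}\frac{u^{s-1}v^{t-1}}{\Gamma(s)\Gamma(t)}e^{-mu}e^{-nv}\,du\,dv$ truncated appropriately — more precisely, from the ``another proof'' in the Remark, $\sum_{k=0}^{\infty}\bigl[\binom{t+k-1}{k}\frac{1}{m^{s-k}(m+n)^{t+k}} - \binom{s+t+k-1}{t+k}\frac{1}{n^{-k}(m+n)^{s+t+k}}\bigr]$ is exactly $2$ times an absolutely convergent double integral over $0<u<v$ (resp. the symmetric region), hence the partial sums are uniformly bounded and the bracketed terms, summed against $\sum_{m,n}$, are dominated by $\int_{[0,\infty)^2}\frac{u^{\sigma-1}v^{\tau-1}}{|\Gamma(s)\Gamma(t)|}\sum_{m,n\in\N}e^{-mu-nv}\,du\,dv = \frac{1}{|\Gamma(s)\Gamma(t)|}\int_{[0,\infty)^2}\frac{u^{\sigma-1}v^{\tau-1}}{(e^u-1)(e^v-1)}\,du\,dv$, which is finite precisely because $\sigma,\tau>1$. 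Thus the plan is: (i) write each bracketed coefficient as a double integral over a sub-region using the Mellin computation from the Remark; (ii) multiply by $m^{-(s-k)}$-type factors absorbed into $e^{-mu}$ and sum over $m,n\in\N$ by Tonelli, since everything in sight is nonnegative once we pass to absolute values; (iii) read off absolute convergence from finiteness of the resulting Gamma-type integral.

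The main obstacle is the interchange of the two summations — summing over $k$ versus summing over $m,n$ — and over the integration: one must justify that $\sum_k |\text{bracket}_k(m,n)|$, not merely $|\sum_k \text{bracket}_k(m,n)|$, is controlled, because the series $\sum_k \text{bracket}_k$ in \eqref{eqn: IPFD, infinite partial fraction decomposition} is genuinely a \emph{difference} of two divergent series whose terms individually grow. I would handle this by keeping the two halves grouped exactly as the Mellin integral over $0<u<v$ dictates (so the telescoping/cancellation that makes the tail $y^{-t}\!\to$ integrable is built in), applying Lemma \ref{lem: limit of ratio of the upper incomplete gamma and gamma functions} to see the remainder term vanishes, and only then taking absolute values and invoking Tonelli. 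A secondary technical point is the factor $1/|\Gamma(s)\Gamma(t)|$ — harmless since $s,t$ are fixed with $\re(s),\re(t)>1$ so $\Gamma(s),\Gamma(t)\neq0$. With these in place, absolute convergence of $\sum_{k=0}^{\infty}\bigl[\binom{t+k-1}{k}\zeta_2(s-k,t+k) - \binom{s+t+k-1}{s+k}\zeta_2(-k,s+t+k)\bigr]$ follows.
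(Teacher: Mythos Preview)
There is a genuine gap, rooted in two confusions.

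First, a misconception that sends you off in the wrong direction: the points $(s-k,t+k)$ and $(-k,s+t+k)$ do \emph{not} leave $\mathcal{D}_2$. Since $\re(t+k)\ge\re(t)>1$ and $\re((s-k)+(t+k))=\re(s+t)>2$ (and similarly for the second point), both double zeta values are given by honest absolutely convergent double series over $m,n\in\N$; no meromorphic continuation is needed. Second, and more seriously, the integral route does not produce the bracket you are trying to bound. The Remark's computation gives
\[
\int_{0<u<v}(\cdots)\,du\,dv=\sum_{k\ge0}\left[\binom{t+k-1}{k}\frac{1}{m^{s-k}(m+n)^{t+k}}-\binom{s+t+k-1}{t+k}\frac{n^k}{(m+n)^{s+t+k}}\right],
\]
whereas the lemma's bracket is the \emph{first line} of (IPFD), with second binomial $\binom{s+t+k-1}{s+k}$ and factor $m^k$, not $\binom{s+t+k-1}{t+k}$ and $n^k$. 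Your displayed formula mixes one half of each IPFD line, and neither $\int_{0<u<v}$ nor its symmetric partner recovers the correct grouping. Even granting an integral identity for the \emph{sum} over $k$ of the right bracket, that would control only $\bigl|\sum_k(\cdot)\bigr|$, not $\sum_k|\cdot|$; your proposed fix of ``regrouping as the integral dictates'' is precisely a change away from the statement to be proved.

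What the paper does instead is a direct termwise estimate: expand each $\zeta_2$ as its double series (licit by the first remark above), factor out $\frac{1}{\Gamma(t)(m+n)^t}\bigl(\frac{m}{m+n}\bigr)^k$, and use the Tricomi--Erd\'elyi asymptotic $\Gamma(k+a)/\Gamma(k+b)\sim k^{a-b}$ to see that $\frac{\Gamma(t+k)}{\Gamma(k+1)}$ and $\frac{\Gamma(s+t+k)}{\Gamma(s+k+1)}$ agree to leading order in $k$, leaving a factor $k^{t-1}(1+o(1))\,\bigl|m^{-s}-(m+n)^{-s}\bigr|$. Then $\bigl(\frac{m}{m+n}\bigr)^k k^{\re(t)-1}\le C\,(m/n)^{\lfloor\re(t)\rfloor+1}k^{-2+\{\re(t)\}}$ by the binomial lower bound on $(1+n/m)^k$, giving summable decay in $k$, and the residual Mordell--Tornheim-type series in $m,n$ is checked to converge. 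The cancellation that drives the lemma is thus in the matching leading asymptotics of the two Gamma ratios, not in any integral representation.
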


\begin{proof}
We just evaluate the summand. For sufficient large $k$, we have
\begin{align*}
  &\left| \binom{t+k-1}{k}\zeta_2(s-k,t+k) - \binom{s+t+k-1}{s+k}\zeta_2(-k,s+t+k) \right| \\
  &\le \sum_{m,n\in\N}\left| \binom{t+k-1}{k}\frac{1}{m^{s-k}(m+n)^{t+k}} - \binom{s+t+k-1}{s+k}\frac{1}{m^{-k}(m+n)^{s+t+k}} \right| \\
  &= \sum_{m,n\in\N} \left|\frac{1}{\Gamma(t)} \frac{1}{(m+n)^t}\left(\frac{m}{m+n}\right)^k\right| \cdot 
      \left|
       \frac{\Gamma(t+k)}{\Gamma(k+1)}\frac{1}{m^s} - \frac{\Gamma(s+t+k)}{\Gamma(s+k+1)}\frac{1}{(m+n)^s}
      \right|\\
  &= \sum_{m,n\in\N} 
      \left|\frac{1}{\Gamma(t)} \frac{1}{(m+n)^t}\left(\frac{m}{m+n}\right)^kk^{t-1}(1+o(1))\right| \cdot 
      \left|
       \frac{1}{m^s} - \frac{1}{(m+n)^s} 
      \right|.
\end{align*}
In the last equality, we use the result by Tricomi and Erd\'elyi \cite{TE};
\begin{align*}
\frac{\Gamma(x+a)}{\Gamma(x+b)}
 = x^{a-b}\left( 1 + \frac{(a-b)(a+b-1)}{2x} + O(x^{-2}) \right) \quad (\text{as } x\rightarrow\infty,\,a,\,b\in\C).
\end{align*}
By the binomial theorem, we have
\[
   \left| \left(\frac{m}{m+n}\right)^k k^{t-1} \right|
   = \left| \frac{k^{t-1}}{(1+n/m)^k} \right|
   = \left(\frac{m}{n}\right)^{\lfloor \re(t)\rfloor+1 }O\left(\frac{1}{k^{2-\{\re(t)\}}}\right).
\]
For $x\in\R$, the symbol $\lfloor x\rfloor$ (resp. $\{x\}$) stands for the integer (resp. fractional) part of $x$.
Therefore one obtain the claim if the series
\begin{equation}\label{eqn: mtmzf}
  \sum_{m,n\in\N}
   \left| \frac{1}{m^{-\lfloor \re(t)\rfloor -1}n^{\lfloor \re(t)\rfloor+1}(m+n)^{t}} \right|
   \left| \frac{1}{m^s} - \frac{1}{(m+n)^s} \right| 
\end{equation}
converges absolutely.
Note that 
\begin{align*}
 \left| \frac{1}{m^s} - \frac{1}{(m+n)^s} \right|
 &= \left| \int_{m+n}^m(-s)u^{-s-1}du \right| \\
 &\le |s|\int_{m}^{m+n}u^{-\re(s)-1}du \\
 &= \frac{|s|}{\re(s)}\left( \frac{1}{m^{\re(s)}} - \frac{1}{(m+n)^{\re(s)}} \right).
\end{align*}
For $\re(s)>2$, since $|(\frac{m}{m+n})^{\re(s)}|<1$, we have
{\small
\begin{align*}
   &\sum_{m,n\in\N}\left|\frac{1}{m^{\re(s)-\lfloor \re(t)\rfloor -1}n^{\lfloor \re(t)\rfloor+1}(m+n)^t}
    \left(1 - \left(\frac{m}{m+n}\right)^{\re(s)} \right)\right| \\
   &\le \sum_{m,n\in\N}\left|\frac{1}{m^{\re(s)-\lfloor \re(t)\rfloor -1}n^{\lfloor \re(t)\rfloor+1}(m+n)^t}\right|
   = \sum_{m,n\in\N}\frac{1}{m^{\re(s)-\lfloor \re(t)\rfloor -1}n^{\lfloor \re(t)\rfloor+1}(m+n)^{\re(t)}}.
\end{align*}
}
The right-hand side of the above inequality is nothing but the Mordell-Tornheim double zeta function
\footnote{Mordell-Tornheim double zeta function $\sum_{m,n\in\N}m^{-r}n^{-s}(m+n)^{-t}$ 
converges when $\re(r+t)>1$, $\re(s+t)>1$ and $\re(r+s+t)>2$.} 
and one can easily check that it converges.
Similarly, for $1< \re(s)\le2$, 
since $|1-(\frac{m}{m+n})^{\re(s)}|\le|1-(\frac{m}{m+n})^2|=\frac{2mn}{(m+n)^2}+\frac{n^2}{(m+n)^2}$,
one can show that the series \eqref{eqn: mtmzf} converges.
\end{proof}

By combining Proposition \ref{prop: IPFD, infinite partial fraction decomposition} 
and Lemma \ref{lem: justification of the change of order of two infinite summations}, 
we get the shuffle product relations for double zeta functions.

\begin{prop}\label{prop: explicit formula for shuffle product relation for double zeta functions}
For $s$, $t\in\C$ with $\re(s)$, $\re(t)>1$, we have
\begin{align}\label{eqn: shuffle product relation for double zeta functions}
 \zeta(s)\zeta(t)
  &= \sum_{k=0}^{\infty} \left[\binom{t+k-1}{k}\zeta_2(s-k,t+k) - \binom{s+t+k-1}{s+k}\zeta_2(-k,s+t+k) \right] \\
  &\quad+ \sum_{k=0}^{\infty} \left[ \binom{s+k-1}{k}\zeta_2(t-k,s+k) - \binom{s+t+k-1}{t+k}\zeta_2(-k,s+t+k) \right]\notag.
\end{align}
\end{prop}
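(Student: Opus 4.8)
The plan is to sum the identity \eqref{eqn: IPFD, infinite partial fraction decomposition} over all pairs $x,y\in\N$ and then interchange the double summation over $(x,y)$ with the summation over $k$; the interchange is justified by Lemma \ref{lem: justification of the change of order of two infinite summations}. Since $\re(s),\re(t)>1$, the product $\zeta(s)\zeta(t)$ equals the double series $\sum_{x,y\in\N}x^{-s}y^{-t}$, which converges absolutely. First I would apply Proposition \ref{prop: IPFD, infinite partial fraction decomposition} to each summand (legitimate for each fixed $(x,y)$ because $\re(x),\re(y)>0$) and abbreviate
\[
 f_k(x,y):=\binom{t+k-1}{k}\frac{1}{x^{s-k}(x+y)^{t+k}}-\binom{s+t+k-1}{s+k}\frac{1}{x^{-k}(x+y)^{s+t+k}},
\]
writing $g_k(x,y)$ for the term obtained from $f_k(x,y)$ under the simultaneous exchange $s\leftrightarrow t$, $x\leftrightarrow y$. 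Proposition \ref{prop: IPFD, infinite partial fraction decomposition} then reads $x^{-s}y^{-t}=\sum_{k\ge0}f_k(x,y)+\sum_{k\ge0}g_k(x,y)$ for every $(x,y)$.

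Next I would compute the ``column sums''. The substitution $(n_1,n_2)=(x,x+y)$ gives a bijection from $\N^2$ onto $\{(n_1,n_2):0<n_1<n_2\}$, so $\sum_{x,y\in\N}x^{-(s-k)}(x+y)^{-(t+k)}=\zeta_2(s-k,t+k)$ and $\sum_{x,y\in\N}x^{k}(x+y)^{-(s+t+k)}=\zeta_2(-k,s+t+k)$; these MZFs converge absolutely, since $\re(t+k)\ge\re(t)>1$ and $\re(s+t)>2$ place both index pairs in $\mathcal{D}_2$. Hence $\sum_{x,y\in\N}f_k(x,y)=\binom{t+k-1}{k}\zeta_2(s-k,t+k)-\binom{s+t+k-1}{s+k}\zeta_2(-k,s+t+k)$, and symmetrically $\sum_{x,y\in\N}g_k(x,y)=\binom{s+k-1}{k}\zeta_2(t-k,s+k)-\binom{s+t+k-1}{t+k}\zeta_2(-k,s+t+k)$; these are exactly the summands of the two series on the right-hand side of \eqref{eqn: shuffle product relation for double zeta functions}. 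Therefore \eqref{eqn: shuffle product relation for double zeta functions} will follow once the interchanges $\sum_{x,y}\sum_k f_k=\sum_k\sum_{x,y}f_k$ and $\sum_{x,y}\sum_k g_k=\sum_k\sum_{x,y}g_k$ are established.

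The interchange of the order of summation is the only delicate step, and it is precisely here that the pairing inside $f_k$ and $g_k$ is indispensable: one cannot swap monomial by monomial, since $\binom{t+k-1}{k}x^{-(s-k)}(x+y)^{-(t+k)}$ is unbounded in $k$. By Fubini's theorem it is enough to verify the absolute bound $\sum_{k\ge0}\sum_{m,n\in\N}|f_k(m,n)|<\infty$, together with the analogue for $g_k$, and this is exactly what the proof of Lemma \ref{lem: justification of the change of order of two infinite summations} actually establishes: there one bounds $\sum_{m,n\in\N}|f_k(m,n)|$ by factoring out $(m+n)^{-t}(m/(m+n))^{k}$, estimates the remaining quotient of gamma factors by the Tricomi--Erd\'elyi asymptotics $\Gamma(x+a)/\Gamma(x+b)=x^{a-b}(1+O(x^{-1}))$, controls $(m/(m+n))^{k}k^{\re(t)-1}$, compares the resulting sum over $(m,n)$ with a convergent Mordell--Tornheim double zeta series, and then sums over $k$ against $\sum_k k^{-(2-\{\re(t)\})}$, which converges because $2-\{\re(t)\}>1$. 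The bound for $g_k$ follows from that for $f_k$ by the symmetry $s\leftrightarrow t$, $x\leftrightarrow y$. Putting the column sums back together then gives \eqref{eqn: shuffle product relation for double zeta functions}. The main obstacle is thus entirely contained in this absolute-convergence estimate; the rest of the argument is bookkeeping.
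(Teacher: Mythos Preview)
Your argument is correct and follows exactly the paper's approach: sum the identity \eqref{eqn: IPFD, infinite partial fraction decomposition} over $x,y\in\N$ and invoke Lemma \ref{lem: justification of the change of order of two infinite summations} to justify the interchange of summations, whose proof indeed yields the stronger absolute bound $\sum_{k}\sum_{m,n}|f_k(m,n)|<\infty$ you need for Fubini. You have simply made explicit the bookkeeping that the paper compresses into one sentence.
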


\begin{proof}
This is a direct consequence of Proposition \ref{prop: IPFD, infinite partial fraction decomposition} and
Lemma \ref{lem: justification of the change of order of two infinite summations}.
\end{proof}

We remark that the equation \eqref{eqn: shuffle product relation for double zeta functions}
recovers the original shuffle product relations \eqref{equation: shuffle product relation} by setting $s$, $t\in\N_{\ge2}$.
Moreover, by applying stuffle product relations, we get the {\it functional double shuffle relations} for double zeta functions.

\begin{cor}\label{cor: functional double shuffle relations for double}
For $s$, $t\in\C$ with $\re(s)$, $\re(t)>1$, we have
\begin{align*}
  &\zeta_2(s,t) + \zeta_2(t,s) + \zeta(s+t) \\
  &= \sum_{k=0}^{\infty} \left[\binom{t+k-1}{k}\zeta_2(s-k,t+k) - \binom{s+t+k-1}{s+k}\zeta_2(-k,s+t+k) \right]\notag \\
  &\quad+ \sum_{k=0}^{\infty} \left[ \binom{s+k-1}{k}\zeta_2(t-k,s+k) - \binom{s+t+k-1}{t+k}\zeta_2(-k,s+t+k) \right]\notag.
\end{align*}
\end{cor}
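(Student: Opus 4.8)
The plan is to combine the functional shuffle product relation for double zeta functions (Proposition~\ref{prop: explicit formula for shuffle product relation for double zeta functions}) with the functional version of the stuffle product relation. The key point is that, unlike the shuffle side, the stuffle relation $\zeta(s)\zeta(t) = \zeta_2(s,t) + \zeta_2(t,s) + \zeta(s+t)$ extends verbatim from MZVs to MZFs, and this extension is valid precisely on the region $\re(s), \re(t) > 1$, which is exactly the domain where Proposition~\ref{prop: explicit formula for shuffle product relation for double zeta functions} holds. So both expressions for $\zeta(s)\zeta(t)$ are simultaneously valid on $\{(s,t)\in\C^2 \mid \re(s), \re(t) > 1\}$, and equating them gives the claim.

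The steps, in order, are as follows. First, I would verify the functional stuffle relation on $\re(s), \re(t) > 1$: starting from the absolutely convergent Dirichlet series $\zeta(s)\zeta(t) = \sum_{m,n\in\N} m^{-s} n^{-t}$, split the sum over $m < n$, $m > n$, and $m = n$; reindexing the first region by $(m, n-m)$ gives $\zeta_2(s,t)$, the second by $(n, m-n)$ gives $\zeta_2(t,s)$, and the diagonal gives $\zeta(s+t)$. Absolute convergence of the double series on this region justifies the rearrangement, and one checks $(s,t) \in \mathcal{D}_2$ and $\re(s+t) > 1$ so that all terms make sense. Second, I would invoke Proposition~\ref{prop: explicit formula for shuffle product relation for double zeta functions}, which expresses the same product $\zeta(s)\zeta(t)$ on the same region as the two infinite sums on the right-hand side (their absolute convergence being guaranteed by Lemma~\ref{lem: justification of the change of order of two infinite summations}). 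Third, subtract: since both right-hand sides equal $\zeta(s)\zeta(t)$ for all $(s,t)$ with $\re(s), \re(t) > 1$, they are equal to each other there, which is exactly the assertion of Corollary~\ref{cor: functional double shuffle relations for double}.

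There is essentially no obstacle here, since this is a formal consequence of two results already in hand; the only thing requiring a line of care is confirming that the stuffle extension and the shuffle extension share a common domain of validity, which they do (both live on $\re(s), \re(t) > 1$). One could optionally remark that the identity then propagates to a larger region by analytic continuation, since both sides are meromorphic in $(s,t)$ — the left-hand side by the known meromorphic continuation of $\zeta_2$, the right-hand side term by term — but for the stated version on $\re(s), \re(t) > 1$ no continuation argument is needed. The proof is therefore just: "Combine the functional stuffle product relation for double zeta functions, valid on $\re(s), \re(t) > 1$, with Proposition~\ref{prop: explicit formula for shuffle product relation for double zeta functions}."
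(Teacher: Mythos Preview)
Your proposal is correct and matches the paper's approach exactly: the paper simply states that the corollary follows by applying the stuffle product relation to Proposition~\ref{prop: explicit formula for shuffle product relation for double zeta functions}, and you have spelled out precisely this combination with the appropriate check that both relations are valid on the common region $\re(s),\re(t)>1$.
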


\section{General case and zeta functions of root systems}\label{sec: General case and zeta functions of root systemsn}
In this section, we first show our main theorem which is a generalization of
Proposition \ref{prop: explicit formula for shuffle product relation for double zeta functions} to higher depths.
And then, we will exposit the inductive steps 
by using the zeta-functions of root systems and introducing a new way to write variables of those functions 
in Notation \ref{Notation: matrices like variavles of zeta functions of root systems of type Ar}.
This notation makes it easier to understand the process of the shuffle product
and will be seen in Example \ref{example: depth 2 times 1, 3 times 1, and 2 times 2}.

\subsection{Main result}\label{subsec:Main results}
By applying IPFD to the product of summands of MZFs inductively and taking summation, we obtain the functional relation for MZFs. 
We call such relations for MZFs, that is, functional relations for MZFs obtained by IPFD, {\it shuffle product relations for MZFs}.
In this sense, we state our main result.
\begin{thm}\label{Main theorem}
Shuffle product relations hold for MZFs as functional relations.
Namely, by IPFD, the product of two MZFs $\zeta_p(s_1,\dots,s_p)\zeta_q(t_1,\dots,t_q)$ 
can be expressed as infinite summations in terms of MZFs (depth $p+q$) 
for $s_1$,$\dots$,$s_p$, $t_1$,$\dots$,$t_q\in\C$ with 
\begin{align*}
 \re(s_p),\,\re(t_q)>1,\  \re(s_i),\,\re(t_j)\ge1 \quad (1\le i\le p-1,\  1\le j \le q-1).
\end{align*}
\end{thm}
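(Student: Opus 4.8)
The plan is to prove this by induction on the total depth $p+q$, using \eqref{eqn: IPFD, infinite partial fraction decomposition} as the engine that reduces the depth of one of the two factors by one at the cost of producing infinite sums of zeta-like objects. The base case $p=q=1$ is exactly Proposition \ref{prop: explicit formula for shuffle product relation for double zeta functions}. For the inductive step I would write both MZFs in their ``$m_i$-form'', i.e.\ $\zeta_p(s_1,\dots,s_p)=\sum_{m_1,\dots,m_p\in\N}\prod_{i=1}^p(m_1+\cdots+m_i)^{-s_i}$ and similarly for $\zeta_q$ in new variables $n_1,\dots,n_q$. Forming the product, one has a single $6$-fold... more precisely a $(p+q)$-fold lattice sum over $\N^{p+q}$, and the idea is to apply IPFD to the pair $\bigl(x,y\bigr) = \bigl(m_1+\cdots+m_p,\, n_1+\cdots+n_q\bigr)$ — the two ``innermost'' denominators $x^{s_p}$ and $y^{t_q}$ — so that after the decomposition every resulting summand has the factor $(x+y)^{-(s_p+t_q+k)}$ or similar, which is precisely of the shape that merges the two chains of summation variables into one. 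One thereby expresses the product as an infinite $\C$-linear combination of products of the form $\zeta_{p-1}(\dots)\cdot(\text{something of depth }q+1)$ or, after reorganizing, of genuine MZFs of depth $p+q$; iterating and invoking the inductive hypothesis on the lower-depth products closes the argument. The ``something of depth $q+1$'' is where the zeta-functions of root systems of type $A_r$ enter: the intermediate objects produced by IPFD are naturally these more general Mordell--Tornheim--Witten-type sums, and Notation \ref{Notation: matrices like variavles of zeta functions of root systems of type Ar} is the bookkeeping device for tracking them.

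Concretely, the key steps in order are: (1) rewrite $\zeta_p\cdot\zeta_q$ as an absolutely convergent lattice sum over $\N^{p+q}$ in the region specified by the hypothesis $\re(s_p),\re(t_q)>1$, $\re(s_i),\re(t_j)\ge1$; (2) apply \eqref{eqn: IPFD, infinite partial fraction decomposition} with $x=m_1+\cdots+m_p$, $y=n_1+\cdots+n_q$ to split $\tfrac{1}{x^{s_p}y^{t_q}}$ into the four families of terms appearing in IPFD; (3) substitute this back and collect terms according to which of the four families they come from, recognizing each resulting lattice sum as (a shift in the indices of) a product of a lower-depth MZF with a zeta-function of a root system, or after a second application, a true MZF of depth $p+q$; (4) invoke an analogue of Lemma \ref{lem: justification of the change of order of two infinite summations} — generalized to this setting — to justify interchanging the new infinite sum over $k$ with the lattice summation and with the remaining sums, so that the manipulations are legitimate as identities of convergent series and hence of meromorphically continued functions; (5) apply the induction hypothesis to the lower-depth products and, if necessary, carry out the recursion on the root-system zeta-functions down to the MZF case. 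The region of validity is preserved at each stage because IPFD only requires positive real parts of $x,y$, which hold since $m_i,n_j\in\N$.

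The main obstacle I expect is step (4): the convergence bookkeeping. In the depth-$2$ case this was precisely the content of Lemma \ref{lem: justification of the change of order of two infinite summations}, whose proof used the Tricomi--Erd\'elyi asymptotics for ratios of gamma functions together with a comparison to a Mordell--Tornheim double zeta function. In higher depth one must run the same estimate uniformly over the many remaining summation variables, and the telescoping-cancellation structure inside each bracketed pair of IPFD terms (the difference $\binom{t+k-1}{k}x^{-(s-k)}(x+y)^{-(t+k)}-\binom{s+t+k-1}{s+k}x^{k}(x+y)^{-(s+t+k)}$) must be exploited so that the summand decays like $k^{-1-\varepsilon}$ after the gamma-ratio asymptotics; one then bounds the resulting multiple lattice sum by a convergent zeta-function of a root system, whose convergence domain is classical (Matsumoto--Tsumura, Komori--Matsumoto--Tsumura). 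Packaging this cleanly — ideally by isolating one ``convergence lemma for zeta-functions of root systems'' that subsumes the depth-$2$ Lemma — is the technical heart of the proof; once that is in hand, steps (1)--(3) and (5) are essentially formal manipulations with the generalized binomial theorem and the inductive hypothesis.
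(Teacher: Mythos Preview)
Your proposal is correct and follows essentially the same route as the paper: induction on $p+q$, base case $p=q=1$, IPFD applied to the innermost pair $(M_p^{s_p},N_q^{t_q})$, induction hypothesis on the resulting lower-depth products, and convergence handled by a higher-depth analogue of Lemma \ref{lem: justification of the change of order of two infinite summations}. The only organizational difference is that the paper runs the induction entirely at the level of \emph{summands} (introducing the symbol $\mathfrak{z}_p\!\left(\begin{smallmatrix}s_1,\dots,s_p\\ m_1,\dots,m_p\end{smallmatrix}\right):=M_1^{-s_1}\cdots M_p^{-s_p}$ and proving that $\mathfrak{z}_p\cdot\mathfrak{z}_q$ decomposes into infinite sums of $\mathfrak{z}_{p+q}$), summing over the lattice only at the very end; this sidesteps the need to name the intermediate root-system zeta-functions inside the proof itself, relegating that bookkeeping to the worked examples. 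Your version, which tracks the zeta-functions of root systems explicitly during the induction, is equivalent but slightly heavier; the paper's summand-level phrasing makes the inductive step a two-line affair, at the cost of leaving the convergence justification as a one-sentence pointer to Lemma \ref{lem: justification of the change of order of two infinite summations}.
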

  
\begin{proof}
We use the following notation; for $p$, $q\in\N$, $m_1,\dots,m_p$, $n_1,\dots,n_q\in\N$, 
$(s_1,\dots,s_p)\in\C^p$ and $(t_1,\dots,t_q)\in \C^q$, 
set $M_j:=\sum_{i=1}^jm_i$ ($1\le j\le p$), $N_j:=\sum_{i=1}^jn_i$ ($1\le j\le q$) and
\begin{align*}
 \mathfrak{z}_p\left( \begin{matrix} s_1,\dots,s_p \\ m_1,\dots,m_p \end{matrix} \right)
  := \frac{1}{M_1^{s_1}\cdots M_p^{s_p}}.
\end{align*}
We first show that following claim;
  
\noindent
\textbf{Claim.}
{\it
For $p$, $q\in\N$, the product 
\begin{align*}
 \mathfrak{z}_p\left( \begin{matrix} s_1,\dots,s_p \\ m_1,\dots,m_p \end{matrix} \right)
  \mathfrak{z}_q\left( \begin{matrix} t_1,\dots,t_q \\ n_1,\dots,n_q \end{matrix} \right)
\end{align*}  
can be expressed as infinite summations in terms of $\mathfrak{z}_{p+q}$.
}
  
We prove this claim by induction on $p+q$.
We have already proved the case $p+q=2$.
Assume $p+q>2$.
By \eqref{eqn: IPFD, infinite partial fraction decomposition},
\begin{align*}
 &\mathfrak{z}_p\left( \begin{matrix} s_1,\dots,s_p \\ m_1,\dots,m_p \end{matrix} \right)
  \mathfrak{z}_q\left( \begin{matrix} t_1,\dots,t_q \\ n_1,\dots,n_q \end{matrix} \right) \\
 &= \sum_{k=0}^{\infty}
     \left[ 
      \binom{t_q+k-1}{k}
       \mathfrak{z}_p\left( \begin{matrix} s_1,\dots,s_p-k \\ m_1,\dots,m_p \end{matrix} \right)
        \mathfrak{z}_{q-1}\left( \begin{matrix} t_1,\dots,t_{q-1} \\ n_1,\dots,n_{q-1} \end{matrix} \right) 
         \frac{1}{(M_p+N_q)^{t_q+k}} \right.\\
 &\quad\left.- \binom{s_p+t_q+k-1}{s_p+k}
        \mathfrak{z}_{p}\left( \begin{matrix} s_1,\dots,s_{p-1},-k \\ m_1,\dots,m_p \end{matrix} \right)
         \mathfrak{z}_{q-1}\left( \begin{matrix} t_1,\dots,t_{q-1} \\ n_1,\dots,n_{q-1} \end{matrix} \right)
          \frac{1}{(M_p+N_q)^{s_p+t_q+k}} 
       \right] \\
 &+ \sum_{k=0}^{\infty}
     \left[ 
      \binom{s_p+k-1}{k}
       \mathfrak{z}_{p-1}\left( \begin{matrix} s_1,\dots,s_{p-1} \\ m_1,\dots,m_{p-1} \end{matrix} \right)
        \mathfrak{z}_q\left( \begin{matrix} t_1,\dots,t_{q}-k \\ n_1,\dots,n_q \end{matrix} \right) 
         \frac{1}{(M_p+N_q)^{s_p+k}} \right.\\
 &\quad\left.
     -\binom{s_p+t_q+k-1}{t_q+k}
       \mathfrak{z}_{p-1}\left( \begin{matrix} s_1,\dots,s_{p-1} \\ m_1,\dots,m_{p-1} \end{matrix} \right)
        \mathfrak{z}_{q}\left( \begin{matrix} t_1,\dots,t_{q-1},-k \\ n_1,\dots,n_{q} \end{matrix} \right)
         \frac{1}{(M_p+N_q)^{s_p+t_q+k}}
       \right].
\end{align*}
We use the induction hypothesis on the right-hand side and get the claim.

Moreover, by taking the summation over all $m_1,\dots,m_p, n_1,\dots,n_q\in\N$, we get the theorem. 
The convergence can be shown as in Lemma \ref{lem: justification of the change of order of two infinite summations}.
\end{proof}

\subsection{Zeta-functions of root systems}\label{subsec:zeta functions of root systems}
To describe the inductive steps of the shuffle product relations for MZFs, 
we use the zeta-functions of root systems which are the generalization of various multiple zeta functions; 
Euler-Zagier type, Mordell-Tornheim type, Apostol-Vu type and Witten zeta functions.
They also have connections with various areas of mathematics; mathematical physics and representation theory, for instance.
However, we use only zeta-functions of root systems of type $A_r$, so we omit the precise definition here. 
For the theory of the zeta-functions of root systems, see \cite{KMT24}.
The zeta-functions of root systems of type $A_r$ have the following explicit form;
\begin{prop}[{cf. \cite[Proposition 3.5]{KMT24}}]\label{prop: Explicit formula for zeta function of root systems of type Ar}
For $r\in\N$ and $\bfs=(s_{ij})\in\C^{r(r+1)/2}$, we have
\begin{align*}
 \zeta_r(\bfs;A_r)
  = \sum_{m_1,\dots,m_r\in\N} \prod_{1\le i\le j\le r}(m_i+\cdots+m_j)^{-s_{ij}}. 
\end{align*}
\end{prop}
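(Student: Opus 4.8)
The statement to prove is Proposition~\ref{prop: Explicit formula for zeta function of root systems of type Ar}, the explicit form of the zeta-function of root systems of type $A_r$. Since the precise definition of $\zeta_r(\bfs;A_r)$ is not reproduced in the excerpt (the authors deliberately omit it and cite \cite{KMT24}), the plan must start by recalling it. The definition is the Dirichlet series attached to the root system $A_r$: one sums over dominant integral weights $\lambda = \sum_{i=1}^r m_i \varpi_i$ with $m_i \in \N$ (equivalently, over the interior of the dominant Weyl chamber), and the general term is $\prod_{\alpha \in \Delta_+} \langle \alpha^\vee, \lambda\rangle^{-s_\alpha}$, where $\Delta_+$ is the set of positive roots and $s_\alpha$ are complex variables indexed by $\Delta_+$. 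So first I would write this definition down explicitly for type $A_r$.

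The key combinatorial input is the standard description of the positive roots of $A_r$: they are $\alpha_{ij} = \alpha_i + \alpha_{i+1} + \cdots + \alpha_j$ for $1 \le i \le j \le r$, where $\alpha_1,\dots,\alpha_r$ are the simple roots. There are exactly $r(r+1)/2$ of them, which matches the dimension of the parameter space $\C^{r(r+1)/2}$ in the statement; this tells me the index set for $\bfs = (s_{ij})$ should be identified with $\{(i,j) : 1 \le i \le j \le r\}$. The next step is to compute the pairing $\langle \alpha_{ij}^\vee, \lambda \rangle$ for $\lambda = \sum_{k} m_k \varpi_k$. Using that $\langle \alpha_k^\vee, \varpi_l\rangle = \delta_{kl}$ (fundamental weights are dual to simple coroots) and that in the simply-laced case $\alpha_{ij}^\vee = \alpha_i^\vee + \cdots + \alpha_j^\vee$, I get $\langle \alpha_{ij}^\vee, \lambda\rangle = m_i + m_{i+1} + \cdots + m_j$. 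Substituting this into the defining Dirichlet series immediately yields
\[
 \zeta_r(\bfs;A_r) = \sum_{m_1,\dots,m_r \in \N} \prod_{1 \le i \le j \le r} (m_i + \cdots + m_j)^{-s_{ij}},
\]
which is exactly the claimed formula.

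So the proof is essentially a bookkeeping exercise: unwind the general definition of a root-system zeta-function, plug in the explicit root/coroot/weight data for type $A_r$, and read off the product. The main obstacle is not any analytic subtlety but rather fixing conventions consistently — in particular being careful about (i) whether one sums over dominant weights or over the open Weyl chamber (so that $m_i \in \N = \Z_{\ge 1}$ rather than $\Z_{\ge 0}$, which is what makes each factor $m_i + \cdots + m_j$ strictly positive and the series well-defined on a suitable region), (ii) the identification of the coroot $\alpha_{ij}^\vee$ with the sum $\alpha_i^\vee + \cdots + \alpha_j^\vee$, valid because $A_r$ is simply laced, and (iii) the labelling bijection between $\Delta_+$ and pairs $(i,j)$. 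Once these are pinned down the computation is forced. Since this is a verbatim restatement of \cite[Proposition~3.5]{KMT24}, I would in practice keep the argument short, recall the definition, indicate the coroot pairing computation, and refer to \cite{KMT24} for the convergence region and full details.
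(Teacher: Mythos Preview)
Your outline is correct and is essentially the standard derivation one finds in \cite{KMT24}: write the general root-system zeta-function as a sum over strictly dominant weights, use the $A_r$ description of positive roots $\alpha_{ij}=\alpha_i+\cdots+\alpha_j$, and compute $\langle\alpha_{ij}^\vee,\lambda\rangle=m_i+\cdots+m_j$ via the duality $\langle\alpha_k^\vee,\varpi_l\rangle=\delta_{kl}$ (using that $A_r$ is simply laced). However, the paper itself does \emph{not} prove this proposition: immediately after stating it, the authors write ``In the present paper, we regard the above equation as the definition of the zeta-functions of root systems of type $A_r$'' and refer the reader to \cite{KMT24} for the general theory. So your proof is not being compared against any argument in the paper; you are supplying what the authors deliberately omitted and replaced by a definition-by-formula plus a citation.
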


In the present paper, we regard the above equation 
as the definition of the zeta-functions of root systems of type $A_r$. 
It is easy to check that the series on the right-hand side converges absolutely in 
$\{(s_{ij})\in\C^{r(r+1)/2}\,|\,\re(s_{ij})>1,\,  1\le i\le j\le r\}$.

\begin{notation}\label{Notation: matrices like variavles of zeta functions of root systems of type Ar}
In this paper, we occasionally use an alternative expression of variables of $\zeta_r(\bfs;A_r)$.
\begin{align} \label{eqn: figure of variables of zeta functions of type Ar}
  \zeta_r(\bfs;A_r) 
  = \zeta_r\left(\begin{matrix}
               s_{11} &s_{12} & \cdots &s_{1r} \\
                       &s_{22} & \cdots &s_{2r} \\
                       &        & \ddots &\vdots \\
                       &       &      &s_{rr}
                 \end{matrix}\right).            
\end{align}
\end{notation}

\begin{rem}
Note that when $s_{ij}=0$ for $i\ge2$, 
$\zeta_r(\bfs;A_r)$ coincides with the Euler-Zagier multiple zeta function
\begin{align*}
  \zeta_r(s_{11},\dots,s_{1r},\boldsymbol{0};A_r) = \zeta_r(s_{11},\dots,s_{1r}).
\end{align*}
And also note that the zeta-functions of root systems of type $A_r$, where the nonzero $r$ variables 
form a path starting at a diagonal entry (i.e. $s_{ii}$), moving upward or to the right, 
and ending at the top-right entry (i.e., $s_{1r}$), coincide with the Euler-Zagier multiple zeta functions.
\end{rem}

\begin{exa}\label{Example: zeta-functions of root systems of type A_r, figure of variables}
By Proposition \ref{prop: Explicit formula for zeta function of root systems of type Ar}, 
for $s_1,s_2,s_3,s_4\in \C$ with $\re(s_i)>1$, we have  
\begin{align*}
 &\zeta_3(s_1,s_2,s_3) \\
  &=\zeta_3\left(\begin{matrix} s_1 &s_2 &s_3 \\ &0 &0\\ & &0 \end{matrix}\right) 
  =\zeta_3\left(\begin{matrix} 0 &s_2 &s_3 \\ &s_1 &0\\ & &0 \end{matrix}\right) 
  =\zeta_3\left(\begin{matrix} 0 &0 &s_3 \\ &s_1 &s_2\\ & &0 \end{matrix}\right) 
  =\zeta_3\left(\begin{matrix} 0 &0 &s_3 \\ &0 &s_2\\ & &s_1 \end{matrix}\right), \\
 &\zeta_4(s_1,s_2,s_3,s_4)
 =\zeta_4\left(\begin{matrix} 0 &0 &s_3 &s_4\\ &s_1 &s_2 &0\\ & &0 &0\\ & & &0 \end{matrix}\right)
 =\zeta_4\left(\begin{matrix} 0 &0 &s_3 &s_4\\ &0 &s_2 &0\\ & &s_1 &0\\ & & &0 \end{matrix}\right).
\end{align*}
In general, there are $2^{r-1}$ ways to express the multiple zeta function $\zeta_r(s_1,\dots,s_r)$ 
in terms of a zeta-function of root systems of type $A_r$.
It is also easy to check the following equalities:
\begin{align}\label{eqn:movements of parameters for zeta-functions of root systems}
\zeta_3\left(\begin{matrix} s_1 &0 &s_2\\ &0 &0\\ & &s_3 \end{matrix}\right)=\zeta_3\left(\begin{matrix} s_1 &0 &s_2\\ &s_3 &0\\ & &0 \end{matrix}\right),
\quad
\zeta_4\left(\begin{matrix} s_1 &s_2 &0 &s_3 \\ &0 &0 &0\\ & &0 &0\\ & & &s_4\end{matrix}\right)
  =\zeta_4\left(\begin{matrix} s_1 &s_2 &0 &s_3 \\ &0 &0 &0\\ & &s_4 &0\\ & & &0\end{matrix}\right).
\end{align}
\end{exa}

Additionally, the product of MZFs can be expressed as zeta-functions of root systems of type $A_r$;

\begin{exa}
For $s$, $t\in\C$ with $\re(s)$, $\re(t)>1$,     
\begin{align*}
 \zeta(s)\zeta(t)
 = \sum_{m_1\in\N}\frac{1}{m_1^s}\sum_{m_2\in\N}\frac{1}{m_2^{t}} 
 = \sum_{m_1,m_2\in\N}\frac{1}{m_1^s m_2^{t} (m_1+m_2)^{0}}
 =\zeta_2\left(\begin{matrix} s &0 \\ &t \end{matrix}\right)
 =\zeta_2\left(\begin{matrix} t &0 \\ &s \end{matrix}\right).
\end{align*} 
For $s_1$, $s_2$, $s_3$ $t_1$, $t_2\in\C$ with $\re(s_i), \re(t_i)>1$, 
\begin{align*}
 &\zeta_2(s_1,s_2)\zeta(t_1)
 = \zeta_3\left(\begin{matrix} s_1 &s_2 &0 \\ &0 &0\\ & &t_1 \end{matrix}\right)
 = \zeta_3\left(\begin{matrix} t_1 &0 &0 \\ &s_1 &s_2\\ & &0 \end{matrix}\right), \\
 &\zeta_3(s_1,s_2,s_3)\zeta(t_1)
 = \zeta_4\left(\begin{matrix} s_1 &s_2 &s_3 &0 \\ &0 &0 &0\\ & &0 &0\\ & & &t_1 \end{matrix}\right) 
 = \zeta_4\left(\begin{matrix} t_1 &0 &0 &0 \\ &s_1 &s_2 &s_3\\ & &0 &0\\ & & &0 \end{matrix}\right),\\
 &\zeta_2(s_1,s_2)\zeta_2(t_1,t_2)
 = \zeta_4\left(\begin{matrix} s_1 &s_2 &0 &0 \\ &0 &0 &0\\ & &t_1 &t_2\\ & & &0 \end{matrix}\right)
 = \zeta_4\left(\begin{matrix} s_1 &s_2 &0 &0 \\ &0 &0 &0\\ & &0 &t_2\\ & & &t_1 \end{matrix}\right)
 = \zeta_4\left(\begin{matrix} t_1 &t_2 &0 &0 \\ &0 &0 &0\\ & &0 &s_2\\ & & &s_1 \end{matrix}\right).
\end{align*}
\end{exa}

These notations are useful to understand the process of the shuffle product for MZFs. 

\begin{prop}\label{prop: shuffle product formula for mzfs with zf of type Ar}
For $p$, $q\in\N$, $(s_1,\dots,s_p)\in\C^{p}$, $(t_1,\dots,t_q)\in\C^{q}$ with 
$\re(s_p)>1$, $\re(t)>1$ and $\re(s_i)$, $\re(t_j)\ge1$ ($1\le i\le p-1$, $1\le j\le q-1$), we have
\begin{align*}
 &\zeta_p(s_1,\dots,s_p)\zeta_q(t_1,\dots,t_q)\\
 &= \sum_{k=0}^{\infty} 
     \left[\binom{t_q+k-1}{k} \zeta(\bfs',s_p-k;\bft';t_q+k)
           - \binom{s_p+t_q+k-1}{s_p+k} \zeta(\bfs',-k;\bft';s_p+t_q+k) \right]\notag \\
&\ + \sum_{k=0}^{\infty} 
      \left[ \binom{s_p+k-1}{k}\zeta(\bfs';\bft',t_q-k;s_p+k) 
            - \binom{s_p+t_q+k-1}{t_q+k} \zeta(\bfs';\bft',-k;s_p+t_q+k) \right],\notag
\end{align*}
where $\bfs':=(s_1,\dots,s_{p-1})$, $\bft':=(t_1,\dots,t_{q-1})$.
Here, for $\bfs=(s_1,\dots,s_p)$, $\bft=(t_1,\dots,t_q)$, the symbol $\zeta(\bfs;\bft;u)$ means the following zeta-function of root systems;
$$
\zeta(\bfs;\bft;u)
= \zeta_{p+q+1}\left(\begin{matrix}
s_{1} & \cdots & \cdots &s_{p} & 0 & \cdots & \cdots & 0 & u \\
	& 0    & \cdots & 0 & \vdots & \vdots & \vdots & \vdots & 0 \\
	& 	& \ddots &\vdots & \vdots & \vdots & \vdots & \vdots & \vdots \\
	& 	& 	 & 0 & 0 & \cdots & \cdots & 0 & 0 \\
	&	&	&	      & t_1 & \cdots & \cdots & t_q & 0 \\
	&	&	&	      &   & 0 & \cdots & 0 & 0 \\
	&	&	&	      &   & 	 & \ddots & \vdots & \vdots \\
	&	&	&	      &   & 	 & 	 & 0 & \vdots \\
	&	&	&	      &   & 	 & 	 &   & 0
\end{matrix}\right).
$$
\end{prop}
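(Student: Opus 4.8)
The plan is to derive Proposition \ref{prop: shuffle product formula for mzfs with zf of type Ar} from the one-step IPFD expansion already carried out in the proof of Theorem \ref{Main theorem}, and then re-interpret each resulting summand as a zeta-function of root systems of type $A_{p+q+1}$ via Proposition \ref{prop: Explicit formula for zeta function of root systems of type Ar} together with Notation \ref{Notation: matrices like variavles of zeta functions of root systems of type Ar}. Concretely, I would start from
\[
 \zeta_p(s_1,\dots,s_p)\zeta_q(t_1,\dots,t_q)
 = \sum_{m_1,\dots,m_p,n_1,\dots,n_q\in\N}
   \mathfrak{z}_p\!\left(\begin{matrix} s_1,\dots,s_p \\ m_1,\dots,m_p \end{matrix}\right)
   \mathfrak{z}_q\!\left(\begin{matrix} t_1,\dots,t_q \\ n_1,\dots,n_q \end{matrix}\right),
\]
apply \eqref{eqn: IPFD, infinite partial fraction decomposition} to the last two factors $M_p^{s_p}$ and $N_q^{t_q}$ exactly as in the proof of Theorem \ref{Main theorem}, and then sum over all $m_i,n_j$. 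The four families of summands that appear are, respectively, of the shape
\[
 \frac{1}{M_1^{s_1}\cdots M_{p-1}^{s_{p-1}}M_p^{s_p-k}}\cdot
 \frac{1}{N_1^{t_1}\cdots N_{q-1}^{t_{q-1}}}\cdot\frac{1}{(M_p+N_q)^{t_q+k}}
\]
and the analogous expressions with $s_p-k$ replaced by $-k$, with the roles of the $s$'s and $t$'s swapped, etc. Summing over the lattice points, each of these is by Proposition \ref{prop: Explicit formula for zeta function of root systems of type Ar} a zeta-function of root systems of type $A_{p+q+1}$ with underlying summation variables $(m_1,\dots,m_p,n_1,\dots,n_q)$ — note there are $p+q$ summation variables but the matrix is indexed by $1\le i\le j\le p+q+1$, which is reconciled by the fact that the entries $m_i+\cdots+m_j$ with both endpoints past the first block, or inside the zero blocks, carry exponent $0$.

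The key bookkeeping step is to verify that the four summand types match the four terms in the claimed identity after translating to the matrix notation $\zeta(\bfs;\bft;u)$ defined in the statement. I would make this explicit: in the big matrix for $\zeta(\bfs',s_p-k;\bft';t_q+k)$, the first row records the exponents of $M_1,\dots,M_p$ (namely $s_1,\dots,s_{p-1},s_p-k$) followed by zeros and then the exponent $t_q+k$ in the top-right corner, which corresponds to $(M_p+N_q)^{-(t_q+k)}$; the row beginning with $t_1$ records the exponents of $N_1,\dots,N_q$; and all other entries vanish because $m_i+\cdots+m_j$ for indices straddling the two blocks only occurs once with the nonzero exponent placed in the corner. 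I would point out that this placement is consistent with the equalities in Example \ref{Example: zeta-functions of root systems of type A_r, figure of variables} and \eqref{eqn:movements of parameters for zeta-functions of root systems}, which show that a nonzero exponent on $m_i+\cdots+m_j$ may be slid along an ``L-shaped'' path; here it is slid to the top-right corner. The remaining three summand types are handled identically: for the $-k$ term the corner exponent becomes $s_p+t_q+k$ and the last entry of the first row becomes $-k$; for the two terms coming from the second half of \eqref{eqn: IPFD, infinite partial fraction decomposition} one swaps the roles of the first row and the $t$-row.

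The main obstacle, as in Lemma \ref{lem: justification of the change of order of two infinite summations} and the end of the proof of Theorem \ref{Main theorem}, is justifying that the interchange of the lattice summation $\sum_{m_i,n_j}$ with the infinite sums $\sum_{k=0}^{\infty}$ is legitimate, i.e.\ that each of the four series of zeta-functions of root systems converges absolutely under the hypotheses $\re(s_p),\re(t_q)>1$ and $\re(s_i),\re(t_j)\ge1$. I expect this to follow by the same Tricomi–Erdélyi asymptotics for $\Gamma(t_q+k)/\Gamma(k+1)$ and the binomial-type estimate $(M_p/(M_p+N_q))^k k^{t_q-1}=O(k^{-2+\{\re(t_q)\}})(M_p/N_q)^{\lfloor\re(t_q)\rfloor+1}$ used there, combined with a telescoping pairing of the ``$t_q+k$'' term against the ``$s_p+t_q+k$'' term that produces a factor $|M_p^{-s_p}-(M_p+N_q)^{-s_p}|\le \tfrac{|s_p|}{\re(s_p)}(M_p^{-\re(s_p)}-(M_p+N_q)^{-\re(s_p)})$; the resulting majorant is, after summing over $n_1,\dots,n_q$ and $m_1,\dots,m_{p-1}$ using absolute convergence of the relevant Mordell–Tornheim/Euler–Zagier type series, a convergent multiple series. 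Since this is a routine but lengthy repetition of the argument already given in Lemma \ref{lem: justification of the change of order of two infinite summations}, in the write-up I would simply say that convergence is established exactly as there and refer back to it, and then conclude that the term-by-term identification above yields the stated formula.
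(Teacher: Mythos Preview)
Your proposal is correct and follows essentially the same approach as the paper's own proof: apply \eqref{eqn: IPFD, infinite partial fraction decomposition} to the factors $M_p^{-s_p}$ and $N_q^{-t_q}$, sum over the lattice, identify the four summand types with the $\zeta(\bfs;\bft;u)$ notation, and justify the interchange of sums by the argument of Lemma \ref{lem: justification of the change of order of two infinite summations}. One small bookkeeping slip: since in the actual terms of the formula the second slot has length $q-1$ (or the first has length $p-1$), the relevant root-system zeta is $\zeta_{p+q}(\,\cdot\,;A_{p+q})$ with exactly $p+q$ summation variables, so your attempted reconciliation of ``$p+q$ variables but a size $p+q+1$ matrix'' is unnecessary.
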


\begin{proof}
For $m_1,\dots,m_p\in\N$ and $n_1,\dots,n_q\in\N$, 
set $M_j:=\sum_{i=1}^jm_i$ ($1\le j\le p$) and $N_j:=\sum_{i=1}^jn_i$ ($1\le j\le q$).
Then summands on the left-hand side is 
\begin{align*}
 \frac{1}{M_1^{s_1}\cdots M_p^{s_p}}\cdot\frac{1}{N_1^{t_1}\cdots N_q^{t_q}}. 
\end{align*} 
Applying \eqref{eqn: IPFD, infinite partial fraction decomposition} to the product of $\frac{1}{M_p^{s_p}}$ and $\frac{1}{N_q^{t_q}}$, 
and taking summation over all $m_1,\dots,m_p\in\N$ and $n_1,\dots,n_q\in\N$ implies the claim. 
Note that we can justify the change of infinite summations 
as in Lemma \ref{lem: justification of the change of order of two infinite summations}.
\end{proof}

\begin{rem}
Let $a$, $b\in\N$ and $a$, $b\ge2$. 
When $s_p=a$ $t_q=b$, then we have
\begin{align*}
 &\zeta_p(s_1,\dots,s_{p-1},a)\zeta_q(t_1,\dots,t_{q-1},b)\\
 &= \sum_{k=0}^{a-1} 
     \binom{b+k-1}{k} \zeta(\bfs',a-k;\bft';b+k)\notag 
  + \sum_{k=0}^{b-1} 
     \binom{a+k-1}{k}\zeta(\bfs';\bft',b-k;a+k).\notag
\end{align*}
Note that this relation holds for all $s_1,\dots,s_{p-1}$, $t_1,\dots,t_{q-1}\in\C$ except for the singularities.
\end{rem}

In the rest of this section, we explicitly describe how to calculate the shuffle product relations inductively.

\begin{exa} \label{example: depth 2 times 1, 3 times 1, and 2 times 2}
We use the notation \eqref{eqn: figure of variables of zeta functions of type Ar}.
All calculations are carried out by \eqref{eqn: IPFD, infinite partial fraction decomposition}.
\vspace{1.5mm}

\noindent
\textbf{Depth $2$ $\times$ depth $1$.} 
For $s_1$, $s_2$, $t\in\C$ with $\re(s_1)\ge1$, $\re(s_2)>1$ and $\re(t)>1$,  
{\small
\begin{align}\label{example: depth 2 times depth 1}
 &\zeta_2(s_1,s_2)\zeta(t) = \zeta_3\left(\begin{matrix} s_1 &s_2 &0 \\ &0 &0\\ & &t \end{matrix}\right)\\
 &= \sum_{k=0}^{\infty}
     \left[ 
      \binom{t+k-1}{k} \zeta_3\left(\begin{matrix} s_1 &s_2-k &t+k \\ &0 &0 \\ & &0 \end{matrix}\right)  
      - \binom{t+s_2+k-1}{s_2+k} \zeta_3\left(\begin{matrix} s_1 &-k &s_2+t+k \\ &0 &0 \\ & &0 \end{matrix}\right)
     \right] \notag\\
 &\quad+ \sum_{k=0}^{\infty}
     \left[ 
      \binom{s_2+k-1}{k}\zeta_3\left(\begin{matrix} s_1 &0 &s_2+k\\ &0 &0\\ & &t-k \end{matrix}\right)
      - \binom{s_2+t+k-1}{t+k}\zeta_3\left(\begin{matrix} s_1 &0 &s_2+t+k\\ &0 &0\\ & &-k \end{matrix}\right)
     \right]\notag.
\end{align}
}
Note that the summands on the first summation of the right-hand side, that is,
$\zeta_3\left(\begin{matrix} s_1 &s_2-k &t+k \\ &0 &0 \\ & &0 \end{matrix}\right)$ 
and $\zeta_3\left(\begin{matrix} s_1 &-k &s_2+t+k \\ &0 &0 \\ & &0 \end{matrix}\right)$ are the Euler-Zagier multiple zeta functions.
We calculate the summands on the second summation.
Firstly, we have
\begin{align*}
  \zeta_3\left(\begin{matrix} s_1 &0 &s_2+k\\ &0 &0\\ & &t-k \end{matrix}\right) 
   &= \zeta_3\left(\begin{matrix} s_1 &0 &s_2+k\\ &t-k &0\\ & &0 \end{matrix}\right) \\
  &= \sum_{j=0}^{\infty}
      \left[ 
       \binom{t-k+j-1}{j} \zeta_3\left(\begin{matrix} s_1-j &t-k+j &s_2+k\\ &0 &0\\ & &0 \end{matrix}\right)\right.\notag\\
  &\qquad\quad\left.     
       - \binom{s_1+t-k+j-1}{s_1+j} \zeta_3\left(\begin{matrix} -j &s_1+t-k+j &s_2+k\\ &0 &0\\ & &0 \end{matrix}\right)
      \right]\notag \\
  &+ \sum_{j=0}^{\infty}
      \left[ 
       \binom{s_1+j-1}{j}\zeta_3\left(\begin{matrix} 0 &s_1+j &s_2+k\\ &t-k-j &0\\ & &0 \end{matrix}\right) \right.\notag \\
  &\qquad\quad\left. 
        - \binom{s_1+t-k+j-1}{t-k+j}\zeta_3\left(\begin{matrix} 0 &s_1+t-k+j &s_2+k\\ &-j &0\\ & &0 \end{matrix}\right)
      \right]. \notag
\end{align*}
Note that the first equality holds by \eqref{eqn:movements of parameters for zeta-functions of root systems} and all members of the right-hand side are multiple zeta functions
 (see Example \ref{Example: zeta-functions of root systems of type A_r, figure of variables}). 
By using \eqref{eqn:movements of parameters for zeta-functions of root systems} again, for any $k\in\N_0$, we have 
\begin{align} \label{eqn: some variable equal non positive integer}
  \zeta_3\left(\begin{matrix} s_1 &0 &s_2+t+k\\ &0 &0\\ & &-k \end{matrix}\right)
  &=\zeta_3\left(\begin{matrix} s_1 &0 &s_2+t+k\\ &-k &0\\ & &0 \end{matrix}\right) \\
  &= \sum_{j=0}^{k}
     (-1)^j\binom{k}{j} \zeta_3\left(\begin{matrix} s_1-j &-k+j &s_2+t+k\\ &0 &0\\ & &0 \end{matrix}\right)\notag \\
  &= \sum_{j=0}^{k}
     (-1)^j\binom{k}{j} \zeta_3\left(s_1-j,-k+j, s_2+t+k\right).\notag
\end{align} 
The second equality holds because, 
for $m_1$, $m_2$, $m_3\in\N$, and $s\in\C$, we have
{\small
\begin{align*}
 &\frac{1}{m_1^{s_1}m_2^{s}(m_1+m_2+m_3)^{s_2+t}} \\
 &= \sum_{j=0}^{\infty} 
     \left[ \binom{s+j-1}{j}\mathfrak{z}_3\left( \begin{matrix} s_1-j,s+j,s_2+t \\ m_1,m_2,m_3 \end{matrix} \right) 
     - \binom{s_1+s+j-1}{s_1+j}\mathfrak{z}_3\left( \begin{matrix} -j,s_1+s+j,s_2+t \\ m_1,m_2,m_3 \end{matrix} \right) \right]\\
 &\quad+ \sum_{j=0}^{\infty} 
          \left[ \binom{s_1+j-1}{j}\mathfrak{z}_3\left( \begin{matrix} s-j,s_1+j,s_2+t \\ m_2,m_1,m_3 \end{matrix} \right)
           - \binom{s_1+s+j-1}{s+j}\mathfrak{z}_3\left( \begin{matrix} -j,s_1+s+j,s_2+t \\ m_2,m_1,m_3 \end{matrix} \right) \right].
\end{align*}
}
By the notation \eqref{eqn:general binomial coefficient},
if we put $s=-k\in\Z_{\le0}$, $\binom{s_1-k+j-1}{s_1+j}=0$ for all $j\in\N_0$, 
\begin{align*}
 \binom{-k+j-1}{j} 
 = \begin{cases}
    (-1)^j\binom{k}{j}\quad &(j=0,\dots,k) \\
    0 \quad &(j\ge k+1),
   \end{cases}
\end{align*}
and $\binom{s_1-k+j-1}{-k+j}=0$ for $0\le j\le k-1$.
So the first summation remains only
$(-1)^j\binom{k}{j}\mathfrak{z}_3\left( \begin{matrix} s_1-j,s+j,s_2+t \\ m_1,m_2,m_3 \end{matrix} \right) $
($0\le j\le k$) and the second summation vanishes. 
Hence we get the second equality of \eqref{eqn: some variable equal non positive integer}.  
Thus we obtain
\begin{align*}
&\zeta_2(s_1,s_2)\zeta(t) \\
&= \sum_{k=0}^{\infty}
     \left[ 
      \binom{t+k-1}{k} \zeta_3(s_1, s_2-k, t+k)  
      - \binom{t+s_2+k-1}{s_2+k} \zeta_3(s_1, -k, s_2+t+k)
     \right] \\
 &\quad+ \sum_{k=0}^{\infty}
     \left[ 
      \binom{s_2+k-1}{k} \sum_{j=0}^{\infty}
      \left[ 
       \binom{t-k+j-1}{j} \zeta_3(s_1-j, t-k+j, s_2+k)\right. \right. \\
  &\hspace{4.5cm}\left.- \binom{s_1+t-k+j-1}{s_1+j} \zeta_3(-j, s_1+t-k+j, s_2+k)
      \right] \\
  &\quad\hphantom{\quad+ \sum_{k=0}^{\infty}\left[\right.}+ \binom{s_2+k-1}{k} \sum_{j=0}^{\infty}
      \left[ 
       \binom{s_1+j-1}{j}\zeta_3(t-k-j, s_1+j, s_2+k) \right. \\
  &\hspace{4.5cm}\left. 
        - \binom{s_1+t-k+j-1}{t-k+j}\zeta_3(-j, s_1+t-k+j, s_2+k)
      \right] \\
 &\hspace{2cm} \left.
      - \binom{s_2+t+k-1}{t+k} \sum_{j=0}^{k}
     (-1)^j\binom{k}{j} \zeta_3\left(s_1-j,-k+j, s_2+t+k\right)
     \right].
\end{align*}
When we put $(s_1,s_2)=(b,c)$ and $t=a$ ($b\in\N$, $c,a\in\Z_{\ge2}$), 
the shuffle product relations for MZFs recover the original shuffle product relations for MZVs;
\begin{align*}
\zeta(b,c)\zeta(a)
=&\sum_{k=0}^{c-1}\binom{a+k-1}{k}\zeta(b,c-k,a+k)\\
&+\sum_{k=0}^{a-1}\sum_{j=0}^{b-1}\binom{c+k-1}{k}\binom{a-k+j-1}{j}\zeta(b-j,a-k+j,c+k)\\
&+\sum_{k=0}^{a-1}\sum_{j=0}^{a-k-1}\binom{c+k-1}{k}\binom{b+j-1}{j}\zeta(a-k-j,b+j,c+k).
\end{align*}

\vspace{1.5mm}\noindent
\textbf{Depth $3$ $\times$ depth $1$.}
For $s_1$, $s_2$, $s_3$, $t\in\C$ ($\re(s_1)$, $\re(s_2)\ge1$, $\re(s_3)$, $\re(t)>1$),  
\begin{align}\label{example: depth 3 times depth 1}  
  &\zeta_3(s_1,s_2,s_3)\zeta(t) = \zeta_4\left(\begin{matrix} s_1 &s_2 &s_3 &0 \\ &0 &0 &0\\ & &0 &0\\ & & &t \end{matrix}\right) \\
  &= \sum_{k=0}^{\infty}
      \left[ 
       \binom{t+k-1}{k} \zeta_4\left(\begin{matrix} s_1 &s_2 &s_3-k &t+k \\ &0 &0 &0\\ & &0 &0\\ & & &0\end{matrix}\right) \right.\notag\\ &\hspace{3cm}\left.     
       - \binom{s_3+t+k-1}{s_3+k} \zeta_4\left(\begin{matrix} s_1 &s_2 &-k &s_3+t+k \\ &0 &0 &0\\ & &0 &0\\ & & &0\end{matrix}\right)
      \right] \notag\\
  &\quad+ \sum_{k=0}^{\infty}
      \left[ 
       \binom{s_3+k-1}{k} \zeta_4\left(\begin{matrix} s_1 &s_2 &0 &s_3+k \\ &0 &0 &0\\ & &0 &0\\ & & &t-k\end{matrix}\right) \right.\notag\\ &\hspace{3cm}\left.
       - \binom{s_3+t+k-1}{t+k} \zeta_4\left(\begin{matrix} s_1 &s_2 &0 &s_3+t+k \\ &0 &0 &0\\ & &0 &0\\ & & &-k\end{matrix}\right)
      \right].\notag 
\end{align}
Note that the summands on the first summation of the right-hand side
are multiple zeta functions.
By \eqref{eqn:movements of parameters for zeta-functions of root systems}, for any $k\in\N_0$, we have 
\begin{align}\label{eqn:calculation of zeta function of root systems with (2*1,1)}
 &\zeta_4\left(\begin{matrix} s_1 &s_2 &0 &s_3+k \\ &0 &0 &0\\ & &0 &0\\ & & &t-k\end{matrix}\right)
  =\zeta_4\left(\begin{matrix} s_1 &s_2 &0 &s_3+k \\ &0 &0 &0\\ & &t-k &0\\ & & &0\end{matrix}\right)
\intertext{By using \eqref{eqn: IPFD, infinite partial fraction decomposition}, we get} 
 &= \sum_{j=0}^{\infty}
 \left[ 
  \binom{t-k+j-1}{j} \zeta_4\left(\begin{matrix} s_1 &s_2-j &t-k+j &s_3+k \\ &0 &0 &0\\ & &0 &0\\ & & &0\end{matrix}\right) \right. \notag \\
 &\qquad\quad\left. 
  - \binom{s_2+t-k+j-1}{s_2+j} \zeta_4\left(\begin{matrix} s_1 &-j &s_2+t-k+j &s_3+k \\ &0 &0 &0\\ & &0 &0\\ & & &0\end{matrix}\right)
 \right] \notag\\
 &+ \sum_{j=0}^{\infty}
 \left[ 
  \binom{s_2+j-1}{j} \zeta_4\left(\begin{matrix} s_1 &0 &s_2+j &s_3+k \\ &0 &0 &0\\ & &t-k-j &0\\ & & &0\end{matrix}\right) \right. \notag \\
 &\qquad\quad 
 \left.- \binom{s_2+t-k+j-1}{t-k+j} \zeta_4\left(\begin{matrix} s_1 &0 &s_2+t-k+j &s_3+k \\ &0 &0 &0\\ & &-j &0\\ & & &0\end{matrix}\right)
 \right], \notag 
\end{align}
and similarly to \eqref{eqn: some variable equal non positive integer} in the case of depth $2$ $\times$ depth $1$, we obtain
\begin{align*}
 \zeta_4\left(\begin{matrix} s_1 &s_2 &0 &s_3+t+k \\ &0 &0 &0\\ & &0 &0\\ & & &-k\end{matrix}\right)
  &=\zeta_4\left(\begin{matrix} s_1 &s_2 &0 &s_3+t+k \\ &0 &0 &0\\ & &-k &0\\ & & &0\end{matrix}\right) \\
 &= \sum_{j=0}^k (-1)^j\binom{k}{j} \zeta_4\left(s_1, s_2-j,-k+j, s_3+t+k\right).
\end{align*}
Furthermore, by \eqref{eqn:movements of parameters for zeta-functions of root systems}, for any $k$, $j\in\N_0$, 
\begin{align}\label{eqn:calculation of zeta function of root systems with (1*1,2)}
 &\zeta_4\left(\begin{matrix} s_1 &0 &s_2+j &s_3+k \\ &0 &0 &0\\ & &t-k-j &0\\ & & &0\end{matrix}\right)
  =\zeta_4\left(\begin{matrix} s_1 &0 &s_2+j &s_3+k \\ &t-k-j &0 &0\\ & &0 &0\\ & & &0\end{matrix}\right).
\intertext{By using \eqref{eqn: IPFD, infinite partial fraction decomposition}, we get}
 &= \sum_{l=0}^{\infty}
     \left[
      \binom{t-k-j+l-1}{l} \zeta_4\left(\begin{matrix} s_1-l &t-k-j+l &s_2+j &s_3+k \\ &0 &0 &0\\ & &0 &0\\ & & &0\end{matrix}\right) \right. \notag\\
 &\qquad\quad\left. 
      - \binom{s_1+t-k-j+l-1}{s_1+l} \zeta_4\left(\begin{matrix} -l &s_1+t-k-j+l &s_2+j &s_3+k \\ &0 &0 &0\\ & &0 &0\\ & & &0\end{matrix}\right)
     \right] \notag\\
 &\quad+\sum_{l=0}^{\infty}
         \left[
          \binom{s_1+l-1}{l}\zeta_4\left(\begin{matrix} 0 &s_1+l &s_2+j &s_3+k \\ &t-k-j-l &0 &0\\ & &0 &0\\ & & &0\end{matrix}\right) \right.\notag\\
 &\qquad\quad\left.         
          - \binom{s_1+t-k-j+l-1}{t-k-j+l}\zeta_4\left(\begin{matrix} 0 &s_1+t-k-j+l &s_2+j &s_3+k \\ &-l &0 &0\\ & &0 &0\\ & & &0\end{matrix}\right)
         \right]. \notag
\end{align}
Note that 
the summands on the second summation of the right-hand side 
are multiple zeta functions, see Example \ref{Example: zeta-functions of root systems of type A_r, figure of variables}.
And similarly to \eqref{eqn: some variable equal non positive integer} in the case of depth $2$ $\times$ depth $1$, we obtain
{\small
\begin{align*}
 \zeta_4\left(\begin{matrix} s_1 &0 &s_2+t-k+j &s_3+k \\ &0 &0 &0\\ & &-j &0\\ & & &0\end{matrix}\right)
 &=\zeta_4\left(\begin{matrix} s_1 &0 &s_2+t-k+j &s_3+k \\ &-j &0 &0\\ & &0 &0\\ & & &0\end{matrix}\right) \\
 &= \sum_{l=0}^j (-1)^{l}
     \binom{j}{l}\zeta_4\left(s_1-l, -j+l, s_2+t-k+j, s_3+k\right).
\end{align*}
}
Thus one sees that the product $\zeta_3(s_1,s_2,s_3)\zeta(t)$ can be expressed as infinite summations of MZFs.

\vspace{1.5mm}\noindent
\textbf{Depth $2$ $\times$ depth $2$.} For $s_1,s_2,t_1,t_2\in\C$ ($\re(s_1)$, $\re(t_1)\ge1$, $\re(s_2)$, $\re(t_2)>1$),
\begin{align*}
\zeta_2(s_1,s_2)\zeta_2(t_1,t_2) 
=\zeta_4\left(\begin{matrix} s_1 &s_2 &0 &0\\ &0 &0 &0\\ & &t_1 &t_2\\ & & &0\end{matrix}\right)
\end{align*}
can be expressed by the following four zeta-functions of type $A_r$;
{\small
\begin{align*}
\zeta_4\left(\begin{matrix} s_1 &s_2-k &0 &t_2+k\\ &0 &0 &0\\ & &t_1 &0\\ & & &0\end{matrix}\right),\quad
\zeta_4\left(\begin{matrix} s_1 &-k &0 &s_2+t_2+k\\ &0 &0 &0\\ & &t_1 &0\\ & & &0\end{matrix}\right),& \\
\zeta_4\left(\begin{matrix} s_1 &0 &0 &s_2+k\\ &0 &0 &0\\ & &t_1 &t_2-k\\ & & &0\end{matrix}\right),\quad
\zeta_4\left(\begin{matrix} s_1 &0 &0 &s_2+t_2+k\\ &0 &0 &0\\ & &t_1 &-k\\ & & &0\end{matrix}\right).&
\end{align*}
}
We see that the first two can be expressed in the same way as \eqref{eqn:calculation of zeta function of root systems with (2*1,1)}, by infinite summations of MZFs.
Since the third and fourth members can be computed in the same way, we only explain the third one.
Similarly to \eqref{eqn:movements of parameters for zeta-functions of root systems}, we have
$$
\zeta_4\left(\begin{matrix} s_1 &0 &0 &s_2+k\\ &0 &0 &0\\ & &t_1 &t_2-k\\ & & &0\end{matrix}\right)
=\zeta_4\left(\begin{matrix} s_1 &0 &0 &s_2+k\\ &t_1 &t_2-k&0\\ & &0 & 0\\ & & &0\end{matrix}\right),
$$
so by \eqref{eqn: IPFD, infinite partial fraction decomposition}, 
this function can be expressed by the following four zeta-functions of type $A_r$;
{\small
\begin{align*}
\zeta_4\left(\begin{matrix} s_1-j &0 &t_2-k+j &s_2+k\\ &t_1 &0 &0\\ & &0 &0\\ & & &0\end{matrix}\right),\,
\zeta_4\left(\begin{matrix} -j &0 &s_1+t_2-k+j &s_2+k\\ &t_1 &0 &0\\ & &0 &0\\ & & &0\end{matrix}\right), &\\
\zeta_4\left(\begin{matrix} 0 &0 &s_1+j &s_2+k\\ &t_1 &t_2-k-j &0\\ & &0 &0\\ & & &0\end{matrix}\right),\,
\zeta_4\left(\begin{matrix} 0 &0 &s_1+t_2-k+j &s_2+k\\ &t_1 &-j &0\\ & &0 &0\\ & & &0\end{matrix}\right).&
\end{align*}
}
We see that the first two can be expressed in the same way as \eqref{eqn:calculation of zeta function of root systems with (1*1,2)}, 
by infinite summations of MZFs.
The last two are nothing but MZFs, see Example \ref{Example: zeta-functions of root systems of type A_r, figure of variables}.
Hence, the product $\zeta_2(s_1,s_2)\zeta_2(t_1,t_2)$ can be expressed by an infinite sum of MZFs.
\end{exa}

\begin{rem}
In \cite{BZ10}, Bradley and Zhou proved any Mordell-Tornheim multiple zeta values can be expressed as linear combinations of MZVs.
More generally, by Proposition \ref{prop: shuffle product formula for mzfs with zf of type Ar} 
and the inductive calculation, one can show that Mordell-Tornheim MZF can be expressed as 
infinite summations in terms of Euler-Zagier multiple zeta functions.
\end{rem}

\section{Applications}\label{sec: Applications and problems}
In this section, we provide some applications.
We first mention the double shuffle relations for MZFs as functional relations.
In subsection \ref{subsec:Functional relations for the zeta functions of root systems}, 
we demonstrate that the results by Komori, Matsumoto and Tsumura (\cite{KMT11}) can be deduced from 
Proposition \ref{prop: shuffle product formula for mzfs with zf of type Ar}. 
In subsection \ref{subsec:Sum formula and extended double shuffle relation for MZFs}, 
we show the sum formulas for MZFs studied by Hirose, Murahara and Onozuka (\cite{HMO18}) in the double case. 
We also prove that the extended double shuffle relations for MZFs.

We give the direct consequence of Theorem \ref{Main theorem}.
Since we have already known that the stuffle product relations for MZFs hold, therefore;

\begin{thm}\label{thm: functional double shuffle product relation}
Functional double shuffle relations hold for MZFs. 
Namely, for $s_1$,$\dots$,$s_p$, $t_1$,$\dots$,$t_q\in\C$ with 
\begin{align*}
 \re(s_p),\,\re(t_q)>1,\  \re(s_i),\,\re(t_j)\ge1 \quad (1\le i\le p-1,\  1\le j \le q-1),
\end{align*}
the product of two MZFs $\zeta_p(s_1,\dots,s_p)\zeta_q(t_1,\dots,t_q)$ 
can be calculated in two ways, stuffle product relations and shuffle product relations
and those two relations implies a non-trivial functional relation for MZFs. 
\end{thm}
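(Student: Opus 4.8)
The plan is to observe that Theorem~\ref{thm: functional double shuffle product relation} is essentially a bookkeeping corollary of two facts already in hand: (i) Theorem~\ref{Main theorem}, which says that $\zeta_p(s_1,\dots,s_p)\zeta_q(t_1,\dots,t_q)$ equals an explicit infinite sum of depth-$(p+q)$ MZFs obtained by iterated application of \eqref{eqn: IPFD, infinite partial fraction decomposition}; and (ii) the elementary (and well-known) fact that the stuffle product relations extend verbatim to MZFs as functional relations, so that the same product also equals a \emph{finite} $\Z$-linear combination of depth-$(p+q)$ MZFs. First I would make (ii) precise: taking summands $\mathfrak{z}_p\bigl(\begin{smallmatrix} s_1,\dots,s_p \\ m_1,\dots,m_p\end{smallmatrix}\bigr)$ and $\mathfrak{z}_q\bigl(\begin{smallmatrix} t_1,\dots,t_q \\ n_1,\dots,n_q\end{smallmatrix}\bigr)$, one splits the index set $\{M_1<\cdots<M_p\}\times\{N_1<\cdots<N_q\}$ according to the relative order of the two increasing sequences (the ``stuffle'' combinatorics), which produces finitely many depth-$(p+q)$ Euler--Zagier summations; summing over all $m_i,n_j\in\N$ and invoking absolute convergence on the stated domain gives the stuffle identity for MZFs.

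Next I would simply equate the two expressions for $\zeta_p(s_1,\dots,s_p)\zeta_q(t_1,\dots,t_q)$. Subtracting, one obtains an identity of the form
\begin{align*}
 \sum_{\sigma\in \mathrm{Sh}}\zeta_{p+q}(\sigma(\bfs,\bft))
 = (\text{infinite sum of MZFs from IPFD}),
\end{align*}
valid as a functional relation on the region $\re(s_p),\re(t_q)>1$, $\re(s_i),\re(t_j)\ge1$. Since the left-hand side is \emph{not} literally the right-hand side term by term (the stuffle side is finite, the shuffle side genuinely infinite), this is a nontrivial functional relation. To justify calling it ``double shuffle'', I would point out that Corollary~\ref{cor: functional double shuffle relations for double} is the depth $1\times1$ instance, and that specializing all variables to regular positive integers $\ge2$ collapses the IPFD telescopes (as remarked after Proposition~\ref{prop: IPFD, infinite partial fraction decomposition}) so that one recovers exactly the classical finite double shuffle relations for MZVs; this should be stated as the promised compatibility.

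The one genuine technical point — the ``hard part'' — is convergence and the legitimacy of rearranging the nested infinite sums on the shuffle side once one has taken $\sum_{m_i,n_j}$ inside. This is precisely the phenomenon controlled in Lemma~\ref{lem: justification of the change of order of two infinite summations} in the double case; for general depth one needs the analogous estimate, bounding each summand via the Tricomi--Erd\'elyi asymptotic $\Gamma(x+a)/\Gamma(x+b)=x^{a-b}(1+O(x^{-1}))$ and a geometric-decay factor $(M_i/(M_i+N_j))^k$, reducing absolute convergence to that of a Mordell--Tornheim-type series in the region where $\re(s_p),\re(t_q)>1$ and the remaining real parts are $\ge1$. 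This is exactly the convergence argument already invoked inside the proof of Theorem~\ref{Main theorem} (``The convergence can be shown as in Lemma~\ref{lem: justification of the change of order of two infinite summations}''), so for this theorem I would simply cite it. Everything else is formal: the statement follows by combining Theorem~\ref{Main theorem} with the stuffle relation for MZFs and subtracting.
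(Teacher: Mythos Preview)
Your proposal is correct and matches the paper's own approach: the paper treats Theorem~\ref{thm: functional double shuffle product relation} as an immediate corollary of Theorem~\ref{Main theorem} together with the (well-known) stuffle relations for MZFs, with convergence deferred to Lemma~\ref{lem: justification of the change of order of two infinite summations}. If anything, your write-up is more detailed than the paper's, which simply states ``Since we have already known that the stuffle product relations for MZFs hold'' and gives no further argument.
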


Note that if one specializes the shuffle product relations for MZFs to positive integer indices, 
we get original shuffle product relations for MZVs. 
Thus, functional double shuffle relation for MZFs implies the finite double shuffle relations for MZVs.
The simplest example is Corollary \ref{cor: functional double shuffle relations for double}.
However, we omit examples for higher depths 
because we need much space to write down them and they are so complicated even in the case depth $2$ $\times$ depth $1$, 
the case depth $3$ $\times$ depth $1$ or the case depth $2$ $\times$ depth $2$.
Those cases can be calculated using Example \ref{example: depth 2 times 1, 3 times 1, and 2 times 2} with stuffle product relations.

\begin{rem}
In (\cite{KS23}), we proved that the values of desingularized MZFs at any integer points satisfy the shuffle product relation.
So it seems interesting if our previous results can be extended to the functional relations for desingularized MZFs.
\end{rem}


\subsection{Functional relations for the zeta-functions of root systems}
\label{subsec:Functional relations for the zeta functions of root systems}
As mentioned before, we show that Proposition \ref{prop: shuffle product formula for mzfs with zf of type Ar} yields the works of Komori, Matsumoto, and Tsumura.
By putting $s_2,t\in\N_{\ge2}$ in \eqref{example: depth 2 times depth 1} and 
putting $s_3$, $t\in\N_{\ge2}$ in \eqref{example: depth 3 times depth 1}, 
we get the following.
\begin{cor}[{cf. \cite[(42) and (43)]{KMT11}}]\label{KMT's results}
For $b$, $c\in\N_{\ge2}$ and $s\in\C$ with $\re(s)>1$, we have
\begin{align*}
\zeta_2(s,b)\zeta(c)
 = \sum_{k=0}^{b-1}\binom{c+k-1}{k}\zeta_3(s,b-k,c+k)
  + \sum_{k=0}^{c-1}\binom{b+k-1}{k}\zeta_3\left(\begin{matrix} s &0 &b+k\\ &0 &0\\ & &c-k \end{matrix}\right) \notag.
\end{align*} 
For $c$, $d\in\N_{\ge2}$ and $s_1$, $s_2\in\C$ with $\re(s_1),\re(s_2)>1$, we have
\begin{align*}
 &\zeta_3(s_1,s_2,c)\zeta(d) \\
 &= \sum_{k=0}^{c-1}\binom{d+k-1}{k}\zeta_4(s_1,s_2,c-k,d+k)
  +  \sum_{k=0}^{d-1}\binom{c+k-1}{k}
      \zeta_4\left(\begin{matrix} s_1 &s_2 &0 &c+k\\ &0 &0 &0\\ & &0 &0 \\ & & &d-k \end{matrix}\right). \notag
\end{align*}
\end{cor}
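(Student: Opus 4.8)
The plan is to obtain Corollary \ref{KMT's results} by specializing Example \ref{example: depth 2 times 1, 3 times 1, and 2 times 2}, specifically formulas \eqref{example: depth 2 times depth 1} and \eqref{example: depth 3 times depth 1}, to positive integer indices and observing that the infinite summations collapse to finite ones. First I would treat the depth $2\times$ depth $1$ case. Starting from \eqref{example: depth 2 times depth 1} with $s_1=s$, I set $s_2=b$ and $t=c$ with $b,c\in\N_{\ge2}$. The first infinite summation contains the binomial factors $\binom{c+k-1}{k}$ and $\binom{b+c+k-1}{b+k}$; by the telescoping phenomenon already noted after Proposition \ref{prop: IPFD, infinite partial fraction decomposition} (the same mechanism that makes \eqref{eqn: IPFD, infinite partial fraction decomposition} recover \eqref{eqn: classical partial fractin decompositon}), the terms $-\binom{b+c+k-1}{b+k}\zeta_3(s,-k,b+c+k)$ cancel against later terms, leaving only $\sum_{k=0}^{b-1}\binom{c+k-1}{k}\zeta_3(s,b-k,c+k)$. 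Likewise the second infinite summation collapses to $\sum_{k=0}^{c-1}\binom{b+k-1}{k}\zeta_3\left(\begin{matrix} s&0&b+k\\ &0&0\\ & &c-k\end{matrix}\right)$. This gives the first displayed identity of the corollary directly.

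For the depth $3\times$ depth $1$ case I would proceed identically from \eqref{example: depth 3 times depth 1}, putting $s_3=c$ and $t=d$ with $c,d\in\N_{\ge2}$ (and $s_1,s_2\in\C$ with $\re(s_1),\re(s_2)>1$). The first infinite summation telescopes: the terms with $\binom{c+d+k-1}{c+k}$ cancel, leaving $\sum_{k=0}^{c-1}\binom{d+k-1}{k}\zeta_4(s_1,s_2,c-k,d+k)$. The second infinite summation telescopes to $\sum_{k=0}^{d-1}\binom{c+k-1}{k}\zeta_4\left(\begin{matrix} s_1&s_2&0&c+k\\ &0&0&0\\ & &0&0\\ & & &d-k\end{matrix}\right)$. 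This yields the second displayed identity.

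The one point requiring care — and the main (mild) obstacle — is justifying that the telescoping is legitimate, i.e.\ that one may rearrange and cancel terms of an \emph{a priori} infinite series rather than a finite one. The cleanest way is not to rearrange the infinite series at all, but rather to apply \eqref{eqn: IPFD, infinite partial fraction decomposition} at the level of summands with $s_2,t$ (resp.\ $s_3,t$) already integers: when $s$ and $t$ are positive integers the right-hand side of \eqref{eqn: IPFD, infinite partial fraction decomposition} is literally a finite sum, since the binomial coefficients $\binom{s+t+k-1}{s+k}$ combine with consecutive terms to give the classical PFD \eqref{eqn: classical partial fractin decompositon}; hence the summand identity $\frac{1}{M_1^{s}M_2^{b}N_1^{c}}=\cdots$ is a finite identity, and summing over $m_1,m_2,n_1\in\N$ (legitimate by absolute convergence of each resulting $\zeta_3$, guaranteed by $\re(s)>1$ together with $b,c\ge2$) gives the result with no interchange of infinite summations. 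So the proof is essentially: specialize, invoke the telescoping already observed in \S\ref{sec: Infinite partial fraction decomposition}, and read off the finite identity. Finally I would remark that these are exactly formulas (42) and (43) of \cite{KMT11}, now recovered as specializations of our functional relations, and that the second sums on the right-hand sides are genuine zeta-functions of root systems of type $A_r$ rather than Euler–Zagier MZFs (though by the moves in \eqref{eqn:movements of parameters for zeta-functions of root systems} they can be further developed into MZFs if desired).
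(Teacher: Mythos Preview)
Your proposal is correct and follows essentially the same approach as the paper: the paper simply says that the corollary is obtained ``by putting $s_2,t\in\N_{\ge2}$ in \eqref{example: depth 2 times depth 1} and putting $s_3$, $t\in\N_{\ge2}$ in \eqref{example: depth 3 times depth 1}'', which is exactly your specialization-plus-telescoping argument. Your additional care in justifying the collapse (working at the summand level so that \eqref{eqn: IPFD, infinite partial fraction decomposition} reduces to the finite \eqref{eqn: classical partial fractin decompositon} before summing) is a welcome clarification but not a different route.
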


By applying stuffle product relations to the left-hand side of the above equations,
we get Theorem 3 in \cite{KMT11}.


\subsection{Sum formula and extended double shuffle relation for MZFs}
\label{subsec:Sum formula and extended double shuffle relation for MZFs}
We next study the connection between functional double shuffle relations and sum formulas for double zeta functions.
The sum formulas for multiple zeta functions was proved by Hirose, Murahara and Onozuka by direct calculation.
The example of the simplest case is;
\begin{cor}[{\cite[Theorem 1.2]{HMO18}}]\label{thm: functional sum formula}
For $s\in\C$ with $\re(s)>1$ and $s\ne2$,
\begin{align}\label{eqn: functional sum formula double case}
  \zeta(s)=\sum_{k=0}^{\infty}\left[ \zeta_2(s-k-2, k+2) - \zeta_2(-k, s+k) \right].
\end{align}  
\end{cor}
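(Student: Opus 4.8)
The plan is to derive the sum formula \eqref{eqn: functional sum formula double case} directly from the functional double shuffle relation in Corollary \ref{cor: functional double shuffle relations for double} (equivalently, from Proposition \ref{prop: explicit formula for shuffle product relation for double zeta functions} combined with the stuffle relation), by specializing one of the two variables to a suitable value. Concretely, I would set $t=1$ in the double shuffle relation. The left-hand side of the shuffle formula becomes $\zeta(s)\zeta(1)$, which diverges, so more care is needed: rather than literally plug in $t=1$, I would treat $t$ as a variable near $1$ and extract the relevant identity by comparing the simple poles at $t=1$ on both sides, or alternatively work with the stuffle side $\zeta_2(s,t)+\zeta_2(t,s)+\zeta(s+t)$ and the shuffle side and cancel the divergent contributions. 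The key observation is that on the shuffle side many of the binomial coefficients $\binom{t+k-1}{k}$, $\binom{s+t+k-1}{s+k}$, $\binom{s+t+k-1}{t+k}$ degenerate at $t=1$: for instance $\binom{s+t+k-1}{t+k}$ at $t=1$ is $\binom{s+k}{k+1}$, while $\binom{t+k-1}{k}$ at $t=1$ equals $1$ for all $k$.

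The key steps, in order, would be: (i) write down Corollary \ref{cor: functional double shuffle relations for double} with the second argument regarded as a parameter $t$; (ii) analyze the behavior as $t\to 1$ — the term $\zeta(s+t)$ is regular, $\zeta_2(t,s)$ contributes the pole (since $\zeta_2(1,s)$ diverges) and $\zeta_2(s,t)$ is regular at $t=1$ for $\re(s)>2$, while on the shuffle side the pole comes from the $k=0$ term of the second sum, $\binom{s-1}{-1}\cdots$ — wait, more precisely from the terms involving $\zeta_2(\cdot,s+k)$ with small shift, so I would isolate which summands are singular at $t=1$; (iii) match residues to get one identity, and match constant terms (the finite parts in the Laurent expansion at $t=1$) to get the sum formula; (iv) simplify the resulting binomial coefficients at $t=1$ and reindex the sums so that the surviving terms telescope into exactly $\sum_{k\ge 0}[\zeta_2(s-k-2,k+2)-\zeta_2(-k,s+k)]$. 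The shift by $2$ in the first argument and by $2$ in the second argument of the first term suggests that after setting $t=1$ the relevant reindexing absorbs a shift of $1$ from the $t=1$ specialization and a shift of $1$ from the residue extraction (differentiating a pole term). The condition $s\ne 2$ should enter precisely because at $s=2$ an extra coincidence of poles occurs (e.g. $\zeta_2(s,t)$ acquires a pole at $(s,t)=(2,1)$, or $\zeta(s+t)$ at $s+t=2$ on some boundary), breaking the clean separation of singular and regular parts.

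Alternatively — and this may be cleaner — I would avoid the limiting argument entirely by instead specializing $t$ to a \emph{negative} integer or to $t=0$, where $\zeta(t)$ and the relevant MZFs are still defined by meromorphic continuation, and where the binomial coefficients truncate the infinite sums. Setting $t=0$ formally in IPFD \eqref{eqn: IPFD, infinite partial fraction decomposition} already produces a telescoping/degenerate identity; taking $t$ a generic complex number and then analytically continuing, then specializing, is legitimate since both sides of Corollary \ref{cor: functional double shuffle relations for double} are meromorphic in $(s,t)$. In that approach step (iii) above is replaced by: continue the identity meromorphically in $t$ past the region $\re(t)>1$ (justified because each side is a finite combination of MZFs plus an absolutely-and-locally-uniformly convergent series by Lemma \ref{lem: justification of the change of order of two infinite summations}), then evaluate at the desired $t$.

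The hard part will be the analytic continuation and the pole bookkeeping: one must justify that the infinite series on the shuffle side continues to a meromorphic function of $t$ in a neighbourhood of the target value and identify its poles, then carefully track which individual summands contribute singular parts versus regular parts at that value, and finally verify that the regular parts reorganize exactly into the claimed telescoping sum. Controlling the tail of the series uniformly near $t=1$ (or near the chosen specialization point), so that term-by-term evaluation of the limit is valid, is the technical crux; the estimates of Lemma \ref{lem: justification of the change of order of two infinite summations}, refined to depend continuously on the parameters, should suffice, but spelling this out is where the real work lies. The restriction $s \neq 2$ will have to be shown to be exactly the locus where this procedure fails, which pins down the hypothesis in the statement.
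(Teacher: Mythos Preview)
Your plan is in the right spirit --- specialize one variable to $t=1$ --- but you have made it harder than necessary by working at the level of Corollary \ref{cor: functional double shuffle relations for double}, where the left-hand side $\zeta(s)\zeta(t)$ genuinely diverges at $t=1$ and forces you into pole bookkeeping and analytic continuation of the infinite shuffle series. The paper sidesteps all of this by dropping back one level, to the IPFD identity \eqref{eqn: IPFD, infinite partial fraction decomposition} itself. That identity is valid for $\re(t)>0$, so plugging in $t=1$ is perfectly legal there; at $t=1$ the coefficients $\binom{s+k-1}{k}$ and $\binom{s+k}{1+k}$ in the second sum are consecutive, so that sum telescopes and leaves only the two terms $\frac{1}{x^s(x+y)}+\frac{1}{y(x+y)^s}$. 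One is left with the pointwise identity
\[
\frac{1}{x^s y}-\frac{1}{x^s(x+y)}-\frac{1}{y(x+y)^s}
=\sum_{k\ge0}\Bigl[\tfrac{1}{x^{s-k-1}(x+y)^{k+2}}-\tfrac{1}{x^{-k}(x+y)^{s+k+1}}\Bigr].
\]
Now sum over $1\le x,y\le N$: the left-hand side is exactly the (finite) stuffle combination, which collapses to $\sum_{x=1}^N x^{-(s+1)}$; letting $N\to\infty$ and justifying the interchange on the right as in Lemma \ref{lem: justification of the change of order of two infinite summations} gives $\zeta(s+1)$ equal to the desired series, and a final shift $s\mapsto s-1$ finishes.

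So the comparison is: you propose to analytically continue an identity of meromorphic functions past its stated region and extract finite parts at a pole, which would require uniform control of the tail near $t=1$ that Lemma \ref{lem: justification of the change of order of two infinite summations} does not directly provide; the paper instead exploits that the \emph{pre-summed} identity already holds at $t=1$, where the divergent pieces cancel by an elementary finite-sum stuffle computation. Your route is not wrong, but the technical crux you flagged is real and is exactly what the paper's argument avoids.
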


We here prove this claim as a consequence of functional double shuffle relations for double zeta functions.

{\it Proof. }
By (IPFD) with $t=1$, we have 
\begin{align*}
 \frac{1}{x^sy} 
 &= \sum_{k=0}^{\infty} 
     \left[ \frac{1}{x^{s-k}(x+y)^{1+k}} - \frac{1}{x^{-k}(x+y)^{s+1+k}} \right]\\
 &\quad+ \sum_{k=0}^{\infty} 
          \left[ \binom{s+k-1}{k}\frac{1}{y^{1-k}(x+y)^{s+k}} - \binom{s+k}{1+k}\frac{1}{y^{-k}(x+y)^{s+1+k}} \right] \\
 &= \frac{1}{x^{s}(x+y)} + \frac{1}{y(x+y)^{s}} + 
     \sum_{k=0}^{\infty} 
     \left[ \frac{1}{x^{s-k-1}(x+y)^{2+k}} - \frac{1}{x^{-k}(x+y)^{s+1+k}} \right].
\end{align*}
Note that \eqref{eqn: IPFD, infinite partial fraction decomposition} holds for $\re(s)$, $\re(t)>0$. 
By taking summation with respect to $1\le x,\, y\le N$ ($N\in\N$), we have 
\begin{align}\label{eqn: sum formula for double case}
\begin{split}
 &\sum_{1\le x,y\le N} \left[\frac{1}{x^sy} - \frac{1}{x^{s}(x+y)} - \frac{1}{y(x+y)^{s}} \right] \\
 &= \sum_{k=0}^{\infty} \sum_{1\le x,y\le N}
     \left[ \frac{1}{x^{s-k-1}(x+y)^{2+k}} - \frac{1}{x^{-k}(x+y)^{s+1+k}} \right]. 
\end{split}
\end{align}
Then, it is easy to check that the left-hand side of \eqref{eqn: sum formula for double case} coincides with the following
\begin{align*}
 \sum_{x=1}^N\frac{1}{x^{s+1}}.
\end{align*}
Thus, the left-hand side of \eqref{eqn: sum formula for double case} will be $\zeta(s+1)$ as $N\rightarrow\infty$.
Since we can show that the right-hand side of \eqref{eqn: sum formula for double case} converges absolutely
as in Lemma \ref{lem: justification of the change of order of two infinite summations}, that will be 
\begin{align*}
 \sum_{k=0}^{\infty} \left[\zeta_2(s-k-1,k+2) - \zeta_2(-k,s+k+1) \right].
\end{align*}
Hence, we obtain the sum formula \eqref{eqn: functional sum formula double case} by putting $s\mapsto s-1$.\qed

It is known that sum formulas follow from the extended double shuffle relations(EDSR) for MZVs (\cite{IKZ06}).
We next focus on the EDSR for MZFs.

For any index $\mathbf{k}=(k_1,\dots,k_{n-1},k_n)\in\N^n$, consider the following MZF with single complex variable $s$:
$$
\zeta_n(k_1,\dots,k_{n-1},k_n+s)
=\sum_{0<m_1<\cdots<m_n}\frac{1}{m_1^{k_1}\cdots m_{n-1}^{k_{n-1}}m_{n}^{k_n+s}}.
$$
Put $Z_{\mathbf{k}}^\shuffle(T)$ to be the polynomial introduced in \cite[\S 2]{IKZ06}.
Note that we have $Z_{\mathbf{k}}^\shuffle(T)=\zeta(\mathbf{k})$ for $k_n\ge2$,
and note that the constant term $Z_{\mathbf{k}}^\shuffle(0)=Z_{\mathbf{k}}^\shuffle(T)|_{T=0}$ of the polynomial $Z_{\mathbf{k}}^\shuffle(T)$ is sometimes called the {\it shuffle regularized MZV}.
\begin{thm}[{cf. \cite[Theorem 4.1.(ii)]{AK-MLVs}}]
\label{thm:Laurent series expansion of 1variableMZF by AK}
Write the polynomial $Z_{\mathbf{k}}^\shuffle(T)$ as
$$
Z_{\mathbf{k}}^\shuffle(T)
=\sum_{j=0}^\nu c_j(\mathbf{k})\frac{T^j}{j!}.
$$
Then the principal part of $\Gamma(s+1)\zeta_n(k_1,\dots,k_{n-1},k_n+s)$ at $s=0$ is given by
$$
\Gamma(s+1)\zeta_n(k_1,\dots,k_{n-1},k_n+s)
=\sum_{j=0}^\nu \frac{c_j(\mathbf{k})}{s^j} + O(s) \qquad (s\rightarrow 0).
$$
\end{thm}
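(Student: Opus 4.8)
The plan is to extract the claimed Laurent expansion of $\Gamma(s+1)\zeta_n(k_1,\dots,k_{n-1},k_n+s)$ at $s=0$ from the shuffle-regularization machinery of \cite{IKZ06}, essentially by relating the single-variable MZF to an iterated-integral-type object whose analytic behaviour near $s=0$ is governed by $Z_{\mathbf k}^\shuffle(T)$. First I would recall that for an admissible index ($k_n\ge 2$) the statement is trivial: $\zeta_n(k_1,\dots,k_n+s)$ is holomorphic at $s=0$, $\nu=0$, $c_0(\mathbf k)=\zeta(\mathbf k)=Z_{\mathbf k}^\shuffle(0)$, and $\Gamma(s+1)=1+O(s)$, so both sides agree. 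So the real content is the non-admissible case $k_n=1$, where $\zeta_n$ genuinely has poles along $s=0$ coming from the divergence of the last summation.

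Next I would set up the integral representation. Using the Mellin transform as in the ``another proof'' of \eqref{eqn: IPFD, infinite partial fraction decomposition}, one writes $\zeta_n(k_1,\dots,k_{n-1},k_n+s)$ as an integral over a simplex of a product of factors, with the variable $s$ entering only through one exponent $u^{k_n+s-1}$ near the endpoint where the integral diverges as $s\to 0$. Equivalently — and this is the cleaner route — I would invoke the fact that the one-variable MZF $\zeta_n(k_1,\dots,k_{n-1},k_n+s)$ is, up to the factor $\Gamma(s+1)^{-1}$, exactly the ``$s$-interpolation'' of the iterated integral $I(\cdots)$ that underlies $Z_{\mathbf k}^\shuffle(T)$: the shuffle-regularized value $Z_{\mathbf k}^\shuffle(T)$ is built in \cite[\S2]{IKZ06} precisely so that it records the regularized divergence of that iterated integral, and the identification of the divergence parameter $T$ with $-\log\epsilon$ or with a pole-counting variable is standard. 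The key input from earlier in the paper is \eqref{eqn: IPFD, infinite partial fraction decomposition} together with Lemma \ref{lem: limit of ratio of the upper incomplete gamma and gamma functions}, which lets one peel off the divergent tail in closed form; iterating this, the poles of $\Gamma(s+1)\zeta_n$ at $s=0$ of order $j$ carry coefficients that are themselves (regularized) MZVs, and matching the recursion with the recursion defining the coefficients $c_j(\mathbf k)$ of $Z_{\mathbf k}^\shuffle(T)$ gives $c_j(\mathbf k)$ exactly.

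Concretely, the steps in order are: (1) reduce to $k_n=1$; (2) write $\Gamma(s+1)\zeta_n(k_1,\dots,k_{n-1},1+s)$ via the Mellin/simplex integral so that the $s$-dependence is isolated in one endpoint factor; (3) apply (IPFD) — or integration by parts as in the remark following Proposition \ref{prop: IPFD, infinite partial fraction decomposition} — repeatedly to split off the divergent part, obtaining an expression of the form $\sum_{j\ge 1} (\text{MZV-valued coefficient})/s^j + (\text{holomorphic part})$; (4) identify the resulting coefficients with the $c_j(\mathbf k)$ by comparing with the generating-series/shuffle recursion of \cite[\S2]{IKZ06}, using that $Z_{\mathbf k}^\shuffle$ is characterized by the same recursion and the same initial data (for admissible $\mathbf k$ it equals $\zeta(\mathbf k)$); (5) read off that the order of the pole is $\nu$, the degree of $Z_{\mathbf k}^\shuffle(T)$, so no terms of order $>\nu$ appear. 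The main obstacle is step (4): one must show that the coefficients produced by our analytic peeling-off procedure satisfy \emph{exactly} the same recursion (with the same normalization, e.g.\ the $T^j/j!$ vs $T^j$ conventions and the placement of $\Gamma(s+1)$) as the polynomial $Z_{\mathbf k}^\shuffle(T)$ from \cite{IKZ06}; this is a bookkeeping-heavy comparison of two regularization schemes, and getting the combinatorial factors to line up is where the care is needed. (Alternatively, one could simply cite \cite[Theorem 4.1.(ii)]{AK-MLVs} essentially verbatim, since the statement is quoted as ``cf.'' that reference, in which case the ``proof'' is a one-line reduction plus a remark that the admissible case is immediate.)
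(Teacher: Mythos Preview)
The paper does not prove this theorem at all: it is stated as a quoted result (with the ``cf.\ \cite[Theorem 4.1.(ii)]{AK-MLVs}'' attribution) and then used as a black box in the proof of Corollary \ref{cor: EDSR for MZFs}. So your closing parenthetical --- simply cite Arakawa--Kaneko --- is exactly what the paper does, and is the correct move here.

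Regarding the sketch itself: steps (2)--(4) do not really constitute a proof. The tool \eqref{eqn: IPFD, infinite partial fraction decomposition} decomposes a product $x^{-s}y^{-t}$; it is not a device for isolating the Laurent coefficients of a single MZF $\zeta_n(k_1,\dots,k_{n-1},1+s)$ at $s=0$, and ``apply IPFD repeatedly to split off the divergent part'' has no clear meaning in this setting. The actual argument in \cite{AK-MLVs} passes through multiple polylogarithms: one writes $\Gamma(s+1)\zeta_n(k_1,\dots,k_{n-1},1+s)$ as a Mellin-type integral involving $\Li_{(k_1,\dots,k_{n-1})}(z)$, and then feeds in the asymptotic expansion of $\Li$ as $z\to 1^-$ in powers of $\log(1-z)$, whose coefficients are by construction the $c_j(\mathbf k)$ of $Z_{\mathbf k}^\shuffle(T)$. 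Your outline gestures at this bridge but does not build it; in particular, step (4) --- which you yourself flag as the main obstacle --- is where the entire content lies, and nothing in the present paper supplies that comparison.
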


Note that the constant term $c_0(\mathbf{k})$ means the shuffle regularized MZV $Z_{\mathbf{k}}^\shuffle(0)$.
By using the above theorem, we obtain the following corollary.
\begin{cor}\label{cor: EDSR for MZFs}
Functional double shuffle relations for MZFs induce the extended double shuffle relations for MZVs.
\end{cor}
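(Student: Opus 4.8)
The plan is to derive the extended double shuffle relations (EDSR) for MZVs by specializing the functional double shuffle relations for MZFs to indices of the form $(k_1,\dots,k_{n-1},k_n+s)$ and extracting the constant term of the Laurent expansion at $s=0$. Recall that the classical EDSR is obtained by comparing the shuffle-regularized and stuffle-regularized products of MZVs, which in the $1\times 1$ case amounts to the identity $\zeta^\shuffle(k_1)\zeta^\shuffle(k_2) = \text{(shuffle sum)} = \text{(stuffle sum)}$ after applying the regularization map $\rho$. Here the key observation is that Theorem~\ref{thm:Laurent series expansion of 1variableMZF by AK} tells us that $\Gamma(s+1)\zeta_n(k_1,\dots,k_{n-1},k_n+s)$ has principal part $\sum_{j}c_j(\mathbf{k})s^{-j}$ and constant term $c_0(\mathbf{k})=Z_{\mathbf{k}}^\shuffle(0)$, the shuffle-regularized MZV.

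First I would fix two indices $\mathbf{a}=(a_1,\dots,a_p)$ and $\mathbf{b}=(b_1,\dots,b_q)$ in $\N^p$, $\N^q$, and form the product $\zeta_p(a_1,\dots,a_{p-1},a_p+s)\,\zeta_q(b_1,\dots,b_{q-1},b_q)$, say, with a single complex parameter $s$ placed on the last slot of the first factor so that convergence holds for $\re(s)>1$ (near $s=0$ one works by meromorphic continuation). By Theorem~\ref{thm: functional double shuffle product relation}, this product equals both its stuffle expansion and its shuffle (IPFD) expansion as functional relations in $s$; the difference of the two expansions is identically zero as a meromorphic function of $s$. Multiplying through by $\Gamma(s+1)$ and invoking Theorem~\ref{thm:Laurent series expansion of 1variableMZF by AK} term by term on each MZF appearing, I would expand everything as a Laurent series at $s=0$ and read off the coefficient of $s^0$. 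On the left, $\Gamma(s+1)\zeta_p(\mathbf{a}',a_p+s)$ contributes $Z_{\mathbf{a}}^\shuffle(0)$ plus a principal part; on the right, the shuffle side produces the shuffle product of the regularized MZVs while the stuffle side produces the stuffle product — so matching constant terms (after checking the principal parts cancel, which they must since the identity is exact) yields exactly the defining relation of EDSR, namely $\rho\bigl(Z_{\mathbf{a}}^\shuffle \ast Z_{\mathbf{b}}^\shuffle\bigr) = Z_{\mathbf{a}}^\shuffle \shuffle Z_{\mathbf{b}}^\shuffle$ evaluated at $T=0$, or rather the more symmetric formulation with the regularization operator.

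The main obstacle I anticipate is bookkeeping the principal parts: the functional relation is an identity of meromorphic functions, so the full Laurent tails on both sides agree, but to conclude the \emph{finite} EDSR one needs the constant-term comparison to reproduce precisely the regularization map $\rho$ (equivalently, the $\gamma$-factor / $\zeta(2),\zeta(4),\dots$ corrections) that appears in the Ihara--Kaneko--Zagier formulation. Concretely, the $\Gamma(s+1) = 1 - \gamma s + \cdots$ expansion interacts with the $1/s^j$ poles, and one must verify that the resulting finite part matches the action of $\exp\bigl(\sum_{n\ge 2}\frac{(-1)^n}{n}\zeta(n)T^n\bigr)$-type correction built into $\rho$. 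I would handle this by first treating the $1\times 1$ case in full (where it should reduce, via Corollary~\ref{cor: functional double shuffle relations for double} and the computation in the proof of Corollary~\ref{thm: functional sum formula}, to a transparent statement), confirm the $\gamma$'s and $\zeta(n)$'s line up, and then argue the general case by the same term-by-term Laurent extraction, citing Theorem~\ref{thm:Laurent series expansion of 1variableMZF by AK} to identify each constant term $c_0$ with the corresponding shuffle-regularized value. A secondary, more routine point is justifying that one may expand the infinite sums appearing in the shuffle (IPFD) side as Laurent series in $s$ uniformly near $s=0$; this follows from the absolute-convergence estimates already established in the style of Lemma~\ref{lem: justification of the change of order of two infinite summations}.
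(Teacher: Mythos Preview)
Your overall strategy---multiply the functional double-shuffle identity by $\Gamma(s+1)$, expand at $s=0$, and invoke Theorem~\ref{thm:Laurent series expansion of 1variableMZF by AK} to identify constant terms with shuffle-regularized MZVs---is exactly the paper's. Two points deserve comment.

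First, your concern about the map $\rho$ and the $\gamma$-corrections is misplaced. Theorem~\ref{thm:Laurent series expansion of 1variableMZF by AK} already identifies the constant term $c_0(\mathbf{k})$ with the shuffle-regularized value $Z^{\shuffle}_{\mathbf{k}}(0)$; no separate tracking of $\rho$ or Euler constants is needed. The paper simply computes the constant term of $\Gamma(s+1)\zeta_p(\mathbf{k})\zeta_q(l_1,\dots,l_{q-1},1+s)$ in two ways and obtains $c_0(\mathbf{k}*\mathbf{l})=c_0(\mathbf{k}\shuffle\mathbf{l})$ directly, which is EDSR in the form one actually wants.

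Second, and more substantively, the paper makes a specific simplifying choice that you leave vague: it keeps the first factor at \emph{integer} indices $(k_1,\dots,k_p)$ with $k_p\ge 2$ and places the complex parameter as $1+s$ on the last slot of the second factor. Because $s_p=k_p\in\N$, the first infinite sum in Proposition~\ref{prop: shuffle product formula for mzfs with zf of type Ar} telescopes to the finite sum $\sum_{\tau=0}^{k_p-1}$, and the inner products $\zeta(\mathbf{k}',k_p-\tau;\mathbf{l}';1+s+\tau)$ can then be reduced via the \emph{classical} PFD~\eqref{eqn: classical partial fractin decompositon} and \cite[Proposition~1]{KMT11}. The second infinite sum, evaluated at $s=0$, collapses to the single surviving term $\zeta(\mathbf{k}';\mathbf{l}',1;k_p)$. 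Thus the paper never has to Laurent-expand an infinite IPFD sum term by term near $s=0$. The step you label ``secondary, more routine'' is in fact the one your setup makes hardest: with $a_p+s$ genuinely complex, the IPFD sums do not collapse, and the convergence estimates of Lemma~\ref{lem: justification of the change of order of two infinite summations} are proved only for $\re(s),\re(t)>1$, not uniformly in a neighbourhood of the limit point. The paper's placement of $s$ is precisely what sidesteps this.
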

\begin{proof}
For $k_1,\dots,k_{p-1},l_1,\dots,l_{q-1}\in\N,k_p\in\N_{\ge2}$, we compute the constant term of the Laurent series expansion of
\begin{equation}\label{eqn:Gamma zeta_p zeta_q}
\Gamma(s+1)\zeta_p(k_1,\dots,k_{p-1},k_p)\zeta_q(l_1,\dots,l_{q-1},1+s)
\end{equation}
at $s=0$ in two ways by using Theorem \ref{thm:Laurent series expansion of 1variableMZF by AK}.

Firstly, by stuffle product relations for MZFs, we calculate
\begin{align*}
&\zeta_p(k_1,\dots,k_{p-1},k_p)\cdot\zeta_q(l_1,\dots,l_{q-1},1+s) \\
&=\zeta((k_1,\dots,k_{p-1},k_p)*(l_1,\dots,l_{q-1},1+s)) \\
&=\zeta((k_1,\dots,k_{p-1})*(l_1,\dots,l_{q-1},1+s),k_p) \\
&\quad +\zeta((k_1,\dots,k_{p-1},k_p)*(l_1,\dots,l_{q-1}),1+s) \\
&\qquad +\zeta((k_1,\dots,k_{p-1})*(l_1,\dots,l_{q-1}),k_p+1+s).
\end{align*}
On the second term of the right hand side, by Theorem \ref{thm:Laurent series expansion of 1variableMZF by AK} we have
\begin{align*}
&\Gamma(s+1)\zeta((k_1,\dots,k_{p-1},k_p)*(l_1,\dots,l_{q-1}),1+s) \\
&=(\mbox{pole parts}) + c_0((k_1,\dots,k_{p-1},k_p)*(l_1,\dots,l_{q-1}),1) + O(s).
\end{align*}
On the first and third terms of the right hand side, when $s$ goes to $0$, the limit values of 
{\small
$$
\Gamma(s+1)\zeta((k_1,\dots,k_{p-1})*(l_1,\dots,l_{q-1},1+s),k_p),
\quad \Gamma(s+1)\zeta((k_1,\dots,k_{p-1})*(l_1,\dots,l_{q-1}),k_p+1+s)
$$
}
are expressed respectively as linear combinations of MZVs:
\begin{align*}
\zeta((k_1,\dots,k_{p-1})*(l_1,\dots,l_{q-1},1),k_p),
\quad \zeta((k_1,\dots,k_{p-1})*(l_1,\dots,l_{q-1}),k_p+1).
\end{align*}
So the constant term of the Laurent series expansion of \eqref{eqn:Gamma zeta_p zeta_q} at $s=0$ is obtained as follows:\footnote{Note that $\zeta(k_1,\dots,k_{p-1},k_p)=c_0(k_1,\dots,k_{p-1},k_p)$ for $k_p\ge2$.}
\begin{align}\label{eqn:stuffle-reg product}
&\zeta((k_1,\dots,k_{p-1})*(l_1,\dots,l_{q-1},1),k_p) +c_0((k_1,\dots,k_{p-1},k_p)*(l_1,\dots,l_{q-1}),1) \\
&\quad +\zeta((k_1,\dots,k_{p-1})*(l_1,\dots,l_{q-1}),k_p+1) \notag\\
&=c_0((k_1,\dots,k_{p-1},k_p)*(l_1,\dots,l_{q-1},1)). \notag
\end{align}

Secondly, by Proposition \ref{prop: shuffle product formula for mzfs with zf of type Ar}, we calculate
\begin{align*}
&\zeta_p(k_1,\dots,k_{p-1},k_p)\cdot\zeta_q(l_1,\dots,l_{q-1},1+s) \\
&= \sum_{\tau=0}^{k_p-1} 
	\binom{1+s+\tau-1}{\tau} \zeta(\mathbf{k}',k_p-\tau;\mathbf{l}';1+s+\tau) \\
&\quad + \sum_{\tau=0}^{\infty} 
	\left[ \binom{k_p+\tau-1}{\tau}\zeta(\mathbf{k}';\mathbf{l}',1+s-\tau;k_p+\tau) \right. \\
&\hspace{2cm} \left. - \binom{k_p+1+s+\tau-1}{1+s+\tau} \zeta(\mathbf{k}';\mathbf{l}',-\tau;k_p+1+s+\tau) \right],
\end{align*}
where $\mathbf{k}'=(k_1,\dots,k_{p-1})$, $\mathbf{l}'=(l_1,\dots,l_{q-1})$.
On the first summation of the right hand side, by the classical PFD \eqref{eqn: classical partial fractin decompositon} and Theorem \ref{thm:Laurent series expansion of 1variableMZF by AK}, we have
\begin{align*}
&\Gamma(s+1)\zeta(\mathbf{k}',k_p-\tau;\mathbf{l}';1+s+\tau) \\
&=\Gamma(s+1)\zeta((k_1,\dots,k_{p-1},k_p-\tau)\shuffle(l_1,\dots,l_{q-1}),1+s+\tau) \\
&=(\mbox{pole parts}) + c_0((k_1,\dots,k_{p-1},k_p-\tau)\shuffle(l_1,\dots,l_{q-1}),1+\tau) + O(s)
\end{align*}
for $0\le \tau\le k_p-1$.
On the second summation of the right hand side, when $s$ goes to $0$, the limit value of this summation becomes the single term $\zeta(\mathbf{k}';\mathbf{l}',1;k_p)$ and is calculated by the classical PFD \eqref{eqn: classical partial fractin decompositon} as follows:
\begin{align*}
\zeta(\mathbf{k}';\mathbf{l}',1;k_p)
=\zeta((k_1,\dots,k_{p-1})\shuffle(l_1,\dots,l_{q-1},1),k_p).
\end{align*}
So the constant term of the Laurent series expansion of \eqref{eqn:Gamma zeta_p zeta_q} at $s=0$ is obtained as follows:
\begin{align}\label{eqn:shuffle-reg product}
&\sum_{\tau=0}^{k_p-1}c_0((k_1,\dots,k_{p-1},k_p-\tau)\shuffle(l_1,\dots,l_{q-1}),1+\tau) \\
&\quad+\zeta((k_1,\dots,k_{p-1})\shuffle(l_1,\dots,l_{q-1},1),k_p) \notag \\
&=c_0\left( \sum_{\tau=0}^{k_p-1}((k_1,\dots,k_{p-1},k_p-\tau)\shuffle(l_1,\dots,l_{q-1}),1+\tau) \right. \notag \\
&\hspace{3cm} \left. \vphantom{\sum_{\tau=0}^{k_p-1}}+((k_1,\dots,k_{p-1})\shuffle(l_1,\dots,l_{q-1},1),k_p) \right) \notag \\
&=c_0((k_1,\dots,k_{p-1},k_p)\shuffle(l_1,\dots,l_{q-1},1)). \notag
\end{align}
In the last equality, we used \cite[Proposition 1]{KMT11}. 
Hence, by comparing \eqref{eqn:stuffle-reg product} and \eqref{eqn:shuffle-reg product}, we obtain
$$
c_0((k_1,\dots,k_{p-1},k_p)*(l_1,\dots,l_{q-1},1) - (k_1,\dots,k_{p-1},k_p)\shuffle(l_1,\dots,l_{q-1},1))=0,
$$
that is, the extended double shuffle relations for MZVs are induced from functional double shuffle relations for MZFs.
\end{proof}


\section{Problems}\label{sec:Concluding remarks}

Akiyama, Egami, and Tanigawa (\cite{AET01}) used the so-called Euler-Maclaurin summation formula and
essentially proved the recurrence relations \eqref{eqn:recurrence relations with non-positive integer sr} for MZFs. 
Almost the same time, Matsumoto (\cite{M02}) gave another proof by using the Mellin-Barnes integral formula. 
Today, recurrence relations \eqref{eqn:recurrence relations with complex number sr} 
for MZFs are recognized as a fundamental tool for the study of MZFs.

\begin{cor}{(cf. \cite{AET01}, \cite{M02})}\label{thm: reccurence relation for MZF}
For $r\in\N_{\ge2}$, $l_r\in\N_0$, and $s_1,\dots,s_{r-1}\in\C$ except for singularities, we have
\begin{align}\label{eqn:recurrence relations with non-positive integer sr}
 &\zeta_r(s_1,\dots,s_{r-1},-l_r)\\
 &= -\frac{1}{l_r+1}\zeta_{r-1}(s_1,\dots,s_{r-1}-l_r-1) 
    + \sum_{k=0}^{l_r}\binom{l_r}{k}\zeta_{r-1}(s_1,\dots,s_{r-1}-l_r+k)\zeta(-k). \notag
\end{align}
\end{cor}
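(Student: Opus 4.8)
The plan is to derive the recurrence relation \eqref{eqn:recurrence relations with non-positive integer sr} directly from the infinite partial fraction decomposition \eqref{eqn: IPFD, infinite partial fraction decomposition}, specializing one variable to a non-positive integer so that the infinite sums truncate or telescope. Concretely, I would work at the level of summands: fix $m_1,\dots,m_{r-1},m_r\in\N$ and set $M_j:=\sum_{i=1}^j m_i$. The function $\zeta_r(s_1,\dots,s_{r-1},-l_r)$ is obtained by summing $M_1^{-s_1}\cdots M_{r-1}^{-s_{r-1}}M_r^{l_r}$; the idea is to re-expand the single factor $M_r^{l_r}=(M_{r-1}+m_r)^{l_r}$ via \eqref{eqn: IPFD, infinite partial fraction decomposition} applied to the pair $(x,y)=(M_{r-1},m_r)$ with exponents $(s,t)$ chosen so that $t$ is a negative integer forcing the right-hand side to collapse.

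First I would set up the specialization carefully. In \eqref{eqn: IPFD, infinite partial fraction decomposition}, take $x=M_{r-1}$, $y=m_r$, and put $t=-l_r$ together with a formal $s$; then the binomial coefficients $\binom{t+k-1}{k}=\binom{-l_r+k-1}{k}$ vanish for $k\ge l_r+1$ while $\binom{-l_r+k-1}{k}=(-1)^k\binom{l_r}{k}$ for $0\le k\le l_r$, and similarly $\binom{s+t+k-1}{s+k}=\binom{s-l_r+k-1}{s+k}=0$ for all $k\ge0$ once one also sends $s$ appropriately. The more transparent route, and the one matching the shape of \eqref{eqn:recurrence relations with non-positive integer sr}, is to instead keep $s$ generic in the first bracket and recognize that the surviving finite sum produces $\sum_{k=0}^{l_r}\binom{l_r}{k}M_{r-1}^{-(s-l_r+k)}\cdot\text{(lower-depth piece)}$ together with a single telescoped remainder term carrying the factor $\frac{1}{l_r+1}$, which after taking $s$ to its limiting value and summing over all $m_i$ yields precisely $-\frac{1}{l_r+1}\zeta_{r-1}(s_1,\dots,s_{r-1}-l_r-1)$ and $\sum_{k=0}^{l_r}\binom{l_r}{k}\zeta_{r-1}(s_1,\dots,s_{r-1}-l_r+k)\zeta(-k)$. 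I would track the two geometric-type sub-sums in \eqref{eqn: IPFD, infinite partial fraction decomposition} separately: one collapses to $l_r+1$ terms by the vanishing of binomials, the other telescopes to a single term as it does in the remark following Proposition \ref{prop: IPFD, infinite partial fraction decomposition}, contributing the $\frac{1}{l_r+1}$ coefficient.

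After the summand identity is in hand, I would sum over $m_1,\dots,m_r\in\N$. For this I need to justify interchanging the (now finite) sum over $k$ with the summation over the $m_i$, which is immediate since only finitely many terms appear, and to identify each resulting multiple series: the term with $M_{r-1}^{-(s_{r-1}-l_r+k)}M_r^{-k}$ summed over $m_r$ gives a factor of $\zeta(-k)$ against $\zeta_{r-1}(s_1,\dots,s_{r-2},s_{r-1}-l_r+k)$ after the variable $m_r$ decouples, and the telescoped remainder gives the lone $\zeta_{r-1}$ term with shifted last argument. The statement is then obtained first for $s_1,\dots,s_{r-1}$ in a region of absolute convergence and extended to all non-singular points by analytic continuation, since both sides are meromorphic.

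The main obstacle is the analytic bookkeeping around the variable $m_r$: \eqref{eqn: IPFD, infinite partial fraction decomposition} is stated for $\re(x),\re(y)>0$ and with genuine geometric convergence $|y/(x+y)|<1$, whereas after specializing $t=-l_r$ the ``infinite'' sums are only formally infinite, so I must argue that the telescoping/vanishing observed termwise is legitimate rather than merely formal — essentially re-running the elementary Mellin-transform proof in the remark after Proposition \ref{prop: IPFD, infinite partial fraction decomposition} with $t$ a negative integer, where the incomplete-gamma limit of Lemma \ref{lem: limit of ratio of the upper incomplete gamma and gamma functions} still applies. The secondary subtlety is matching the sign and the normalization of the $\frac{1}{l_r+1}$ term, which requires care in choosing which of the two symmetric halves of \eqref{eqn: IPFD, infinite partial fraction decomposition} one expands; I expect that expanding the half that telescopes (rather than the half that truncates) in the variable pair ordered as $(M_{r-1},m_r)$ produces the stated signs cleanly.
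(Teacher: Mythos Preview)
Your approach has a structural gap that cannot be fixed by bookkeeping. You propose to work at the summand level, expand $(M_{r-1}+m_r)^{l_r}$, and then sum over $m_r$ so that ``the variable $m_r$ decouples'' to produce the factor $\zeta(-k)$. But $\zeta(-k)$ for $k\in\N_0$ is \emph{not} a convergent series: $\sum_{m_r\ge1} m_r^{k}$ diverges, so no termwise identity followed by summation over the $m_i$ can ever yield the products $\zeta_{r-1}(\cdots)\zeta(-k)$ on the right of \eqref{eqn:recurrence relations with non-positive integer sr}. Relatedly, your identification of the $\tfrac{1}{l_r+1}$ term as arising from ``telescoping'' is off: in the paper's derivation that term comes from the $k=l_r+1$ contribution, where the \emph{analytically continued} $\zeta_r(s_1,\dots,s_{r-1}-l_r-1,1)$ has a simple pole that cancels against the zero of $\binom{s_r+k-1}{k}$ at $s_r=-l_r$, leaving a finite residue carrying the factor $(l_r+1)^{-1}$. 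None of this is visible at the level of convergent Dirichlet sums.

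The paper's route is accordingly different and does not work termwise. It starts from the shuffle relation \eqref{eqn: shuffle for depth r-1 times depth 1 for reccurence relation} for the product $\zeta_{r-1}(s_1,\dots,s_{r-1})\cdot\zeta(s_r)$ of two \emph{already analytically continued} functions, assumes (Assumption~\ref{assume: assumption}) that this relation persists at $s_r=-l_r$, observes that the infinite sums collapse to the finite identity
\[
\zeta_{r-1}(s_1,\dots,s_{r-1})\zeta(-l_r)=\sum_{k=0}^{l_r}(-1)^k\binom{l_r}{k}\zeta_r(s_1,\dots,s_{r-1}-k,-l_r+k)+\tfrac{(-1)^{l_r}}{l_r+1}\zeta_{r-1}(s_1,\dots,s_{r-1}-l_r-1),
\]
and then applies binomial inversion (Lemma~\ref{lem: binomial transformation}) to solve for $\zeta_r(s_1,\dots,s_{r-1},-l_r)$. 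Note that the paper explicitly presents this only as a heuristic under Assumption~\ref{assume: assumption}; the corollary itself is quoted as a known result from \cite{AET01}, \cite{M02} rather than proved here.
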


Here, we provide heuristic observation.
By Proposition \ref{prop: shuffle product formula for mzfs with zf of type Ar},
we have 
\begin{align}\label{eqn: shuffle for depth r-1 times depth 1 for reccurence relation}
 \zeta_{r-1}(s_1,\dots,s_{r-1})\zeta(s_r)
 &=\sum_{k=0}^{\infty} 
    \left[\binom{s_r+k-1}{k} \zeta_r(s_1,\dots,s_{r-1}-k,s_r+k) 
 \right. \\ &\qquad\left.
 - \binom{s_{r-1}+s_r+k-1}{s_{r-1}+k} \zeta_r(s_1,\dots,-k,s_{r-1}+s_r+k) \right]\notag \\
 &\quad +\sum_{k=0}^{\infty} 
      \left[ \binom{s_{r-1}+k-1}{k} \zeta(\bfs';s_r-k;s_{r-1}+k) \right.\notag\\
 &\qquad\quad\left. - \binom{s_{r-1}+s_r+k-1}{s_r+k} \zeta(\bfs';-k;s_{r-1}+s_r+k) \right],\notag
\end{align}
for $s_1,\dots,s_{r-1},s_r\in\C$ with $\re(s_i)\ge1$ ($i=1,\dots,r-2$), $\re(s_{r-1}),\,\re(s_r)>1$, 
$\bfs'=(s_1,\dots,s_{r-2})$ and for the second summation of the right-hand side, 
see Proposition \ref{prop: shuffle product formula for mzfs with zf of type Ar}.

\begin{assume}\label{assume: assumption}
We assume that \eqref{eqn: shuffle for depth r-1 times depth 1 for reccurence relation} holds when $s_r=-l$ ($l\in\N_0$).
\end{assume}

Under this assumption, if we put $s_r=-l_r\in\Z_{\le0}$, we have
\begin{align*}
 \binom{-l_r+k-1}{k} 
 = \begin{cases}
    (-1)^k\binom{l_r}{k} \quad&(k=0,\dots,l_r) \\
    0 \quad&(k\ge l_r+1),
   \end{cases}
\end{align*}
$\binom{s_{r-1}-l_r+k-1}{s_{r-1}+k}=0$ for $k\in\N_0$ and $\binom{s_{r-1}-l_r+k-1}{-l_r+k}=0$ for $0\le k\le l_r-1$.
Thus the first summation of the right-hand side of \eqref{eqn: shuffle for depth r-1 times depth 1 for reccurence relation}
remains only $k=0,\dots,l_r$ and $k=l_r+1$.
Because, for $k=l_r+1$, the multiple zeta function $\zeta_r(s_1,\dots,s_{r-1}-k,-l_r+k)$ 
has a simple pole and this cancels out with a zero of the function $\frac{1}{\Gamma(s_r)}$ at $s_r=-l_r$.
The second summation of the right-hand side of \eqref{eqn: shuffle for depth r-1 times depth 1 for reccurence relation} vanishes. 
Therefore, when $s_r=-l_r$, \eqref{eqn: shuffle for depth r-1 times depth 1 for reccurence relation} can be written as follows;
\begin{align*}
 &\zeta_{r-1}(s_1,\dots,s_{r-1})\zeta(-l_r) \\
 &= \sum_{k=0}^{l_r}(-1)^{k}\binom{l_r}{k} \zeta_r(s_1,\dots,s_{r-1}-k,-l_r+k) 
    + \frac{(-1)^{l_r}}{l_r+1}\zeta_{r-1}(s_1,\dots,s_{r-1}-l_r-1).
\end{align*}
By Lemma \ref{lem: binomial transformation}, we get the equation \eqref{eqn:recurrence relations with non-positive integer sr}.
\qed

\begin{lem}\label{lem: binomial transformation}
Let $f,\,g:\C\times\Z\rightarrow\C$ be maps. We assume that 
\begin{align*}
 g(s;-l) = \sum_{k=0}^l \binom{l}{k}f(s-k;-l+k)
\end{align*}
for $s\in\C$ and $l\in\N_0$. Then we have 
\begin{align*}
 f(s;-l) = \sum_{k=0}^l (-1)^k\binom{l}{k}g(s-k;-l+k).
\end{align*}
\end{lem}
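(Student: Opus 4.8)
The statement is the classical binomial inversion formula, adapted so that the second argument tracks the first: we are given $g(s;-l)=\sum_{k=0}^l\binom{l}{k}f(s-k;-l+k)$ and we must recover $f$. The plan is to substitute the hypothesis into the claimed right-hand side and collapse the resulting double sum by the standard alternating-binomial identity $\sum_{j=0}^{m}(-1)^j\binom{m}{j}=\delta_{m,0}$. First I would fix $s\in\C$ and $l\in\N_0$ and compute
\begin{align*}
\sum_{k=0}^l(-1)^k\binom{l}{k}g(s-k;-l+k)
&=\sum_{k=0}^l(-1)^k\binom{l}{k}\sum_{j=0}^{l-k}\binom{l-k}{j}f(s-k-j;-l+k+j),
\end{align*}
where in the inner sum I have applied the hypothesis with $s$ replaced by $s-k$ and $l$ replaced by $l-k$ (so that the "depth drop" $-(l-k)$ there becomes $-l+k$, matching the second argument of $g$). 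The key bookkeeping point is that the second argument of $f$ appearing here is $(-l+k)+j$, i.e. exactly $-(l-k-j)$, which is consistent with the original relation.

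Next I would change the summation variable from $j$ to $n:=k+j$, so that $f$ is evaluated at $(s-n;-l+n)$ throughout, and interchange the order of summation to sum over $n$ from $0$ to $l$ on the outside and $k$ from $0$ to $n$ on the inside:
\begin{align*}
\sum_{n=0}^l f(s-n;-l+n)\sum_{k=0}^n(-1)^k\binom{l}{k}\binom{l-k}{n-k}.
\end{align*}
Using the subset-of-a-subset identity $\binom{l}{k}\binom{l-k}{n-k}=\binom{l}{n}\binom{n}{k}$, the inner sum becomes $\binom{l}{n}\sum_{k=0}^n(-1)^k\binom{n}{k}$, which vanishes unless $n=0$, in which case it equals $1$. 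Hence only the $n=0$ term survives and the whole expression collapses to $f(s;-l)$, which is exactly the claim.

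I do not anticipate a genuine obstacle here; the only thing to be careful about is the indexing of the second argument when invoking the hypothesis with shifted parameters — one must check that replacing $(s,l)$ by $(s-k,l-k)$ really does produce $f(s-k-j;-l+k+j)$ and not some other shift. Once the two binomial identities (Vandermonde-type absorption and the alternating-sum identity) are in place, the computation is a few lines. No convergence issues arise since all sums are finite. The lemma is then used to pass from the explicit $\sum_{k=0}^{l_r}(-1)^k\binom{l_r}{k}\zeta_r(\dots)$ form to the recurrence \eqref{eqn:recurrence relations with non-positive integer sr} by taking $f$ and $g$ to be the appropriate combinations of $\zeta_r$ and $\zeta_{r-1}$ values.
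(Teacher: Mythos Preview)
Your argument is correct and is the standard proof of binomial inversion: substitute the hypothesis, reindex with $n=k+j$, use $\binom{l}{k}\binom{l-k}{n-k}=\binom{l}{n}\binom{n}{k}$, and collapse via $\sum_{k=0}^n(-1)^k\binom{n}{k}=\delta_{n,0}$. The paper itself states this lemma without proof (treating it as well known), so there is nothing to compare against; your write-up fills that gap cleanly, and the bookkeeping check you flag --- that applying the hypothesis at $(s-k,l-k)$ yields $f(s-k-j;-l+k+j)$ --- is indeed the only place one could slip, and you have it right.
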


Under Assumption \ref{assume: assumption}, the recurrence relations \eqref{eqn:recurrence relations with non-positive integer sr} and shuffle product relations for MZFs are equivalent in the special case.  
We point out that the recurrence relations \eqref{eqn:recurrence relations with non-positive integer sr} for MZFs are useful for the calculation of values of MZFs at integer points.
Indeed, the second-named author (\cite{Sh24}) proved that the values of MZFs at regular integer points can be expressed 
as a finite linear combination of MZVs by using generalized recurrence relations.
He proved the generalized recurrence relations for MZFs by using original recurrence relations \eqref{eqn:recurrence relations with non-positive integer sr} and the stuffle product relations for MZFs.
In other words, the generalized recurrence relations for MZFs could be regarded as 
the special cases of functional double shuffle relations for MZFs.

We remark that the recurrence relations of MZFs have more generalized form (cf. \cite{M02});
\begin{align}\label{eqn:recurrence relations with complex number sr}
  \zeta_r(s_1,\dots,s_{r-1},s_r)
  &= \frac{1}{s_r-1}\zeta_{r-1}(s_1,\dots,s_{r-1}+s_r-1) \\
  &\quad+ \sum_{k=0}^{M}\binom{-s_r}{k}\zeta_{r-1}(s_1,\dots,s_{r-1}+s_r+k)\zeta(-k) + I(\bfs,M,\epsilon) \notag
\end{align}
for $M\in\N$, $\epsilon>0$, and 
\begin{align*}
 I(\bfs,M,\epsilon)
 :=\frac{1}{2\pi i}\int_{(M-\epsilon)}\frac{\Gamma(s_r+z)\Gamma(-z)}{\Gamma(s_r)}\zeta_{r-1}(s_1,\dots,s_{r-1}+s_r+z)\zeta(-z)dz,
\end{align*}
where $(M-\epsilon)$ is the path of integration on the vertical line 
$\re(z)=M-\epsilon$ from $M-\epsilon-i\infty$ to $M-\epsilon+i\infty$.
We also know that the shuffle product relation for $\zeta_{r-1}(s_1,\dots,s_{r-1})\zeta(s_r)$. 
These observations lead us to the following problems;

\vspace{3mm}
\noindent
{\bf Problem. }
\begin{enumerate}
 \item Do the shuffle product relations for MZFs hold for wider region?
 \item Are the shuffle product relations and recurrence relations for MZFs equivalent in general? 
\end{enumerate}

If the second problem is positive, then we can say that the shuffle product relations for MZVs/MZFs have an analytic interpretation.
Then, is there any analytic interpretation of stuffle product relations?



\end{document}